
\documentclass[final,leqno]{siamltexmm}

\usepackage{times}
\usepackage{amssymb}
\usepackage{amsfonts}
\usepackage{amsmath}
%% THEOREMS -------------------------------------------------------
%\theoremstyle{plain}
%\newtheorem{thm}{Theorem}
%\newtheorem{cor}{Corollary}
%\newtheorem{lem}{Lemma}
%\newtheorem{prop}{Proposition}
%\theoremstyle{definition}
%\newtheorem{defn}{Definition}
%\theoremstyle{remark}
%\newtheorem{rem}{Remark}
%\newtheorem{ex}{Example}
%\theoremstyle{definition} 
%\numberwithin{equation}{section}
%\newtheorem{prob}{Problem}
%\numberwithin{equation}{section}
% Colors-----------------------------------------------------------
\definecolor{Green}{rgb}{0,.5,0}
%use for definitions
\definecolor{Red}{rgb}{.8,.2,0}
%use for emphasis
\definecolor{Yellow}{rgb}{.6,.6,.1}
%use for part titles
\definecolor{Cyan}{rgb}{.2,.6,.7}
%use for comments
\definecolor{Purple}{rgb}{.4,0,1}
%use for examples
\definecolor{deepred}{rgb}{.53,.29,.24}
%use for important points
\definecolor{Black}{rgb}{0,0,0}
%use for washout
\definecolor{Grey}{rgb}{.45,.45,.45}
% use for theorems
\definecolor{Blue}{rgb}{.0,.322,.533}

%% -----------------------------------------------------------------
%\theoremstyle{remark}
%\newtheorem{lemma}{Lemma}
%\newtheorem{corollary}{Corollary}
%\newtheorem{proposition}{Proposition}
%\newtheorem{theorem}{Theorem}

\newtheorem{remark}{Remark}
\newtheorem{example}{Example}

\title{Storing Cycles in Hopfield-type Networks with Pseudoinverse Learning Rule - Retrievability and Bifurcation Analysis}

\author{Chuan Zhang          \footnotemark[2]
\and    Gerhard Dangelmayr   \footnotemark[2] 
\and    Iuliana Oprea        \footnotemark[2]}

\begin{document}
\maketitle
\newcommand{\slugmaster}{%
\slugger{siads}{xxxx}{xx}{x}{x--x}}%slugger should be set to juq, siads, sifin, or siims

\renewcommand{\thefootnote}{\fnsymbol{footnote}}

\footnotetext[2]{Department of Mathematics, Colorado State University, Fort Collins, Colorado. (\email{zhang@math.colostate.edu}, \email{gerhard@math.colostate.edu}, \email{Juliana@math.colostate.edu}). The work of the first author was partially supported by the Yates Chair Graduate Research Fellowship (2013) awarded by the Department of Mathematics at Colorado State University.}

\begin{abstract}
		In this paper, we study retrievability of admissible cycles and the dynamics of the networks constructed from admissible cycles with the pseudoinverse learning rule. Retrievability of admissible cycles in networks with $C_0>0$ and $\lambda$ sufficiently large are discussed. Based on the linear stability analysis we derive a complete description of all possible local bifurcations of the trivial solution for the networks constructed from admissible cycles. We illustrate numerically that, depending on the structural features, the admissible cycles are respectively stored and retrieved as attracting limit cycles, unstable periodic solutions and delay-induced long-lasting transient oscillations, and the transition from fixed points to the attracting limit cycle bifurcating from the trivial solution takes place through multiple saddle-nodes on limit cycle bifurcations.

\end{abstract}

\begin{keywords}
Cyclic Patterns, Retrievability, Delay, Bifurcations
\end{keywords}

\begin{AMS}\end{AMS}

\pagestyle{myheadings}
\thispagestyle{plain}
\markboth{Chuan Zhang, Gerhard Dangelmayr, and Iuliana Oprea}{Storing Cycles in Neural Networks: Retrievability and Bifurcation Analysis}

%%%%%%%%%%%%%%%%%%%%%%%%%%%%%%%%%%%%%%%%%%%%%%%%%%%%%%%%%%%%%%%%%%%%%%%%%%
\section{Introduction}
%%%%%%%%%%%%%%%%%%%%%%%%%%%%%%%%%%%%%%%%%%%%%%%%%%%%%%%%%%%%%%%%%%%%%%%%%%
Cyclic patterns of neuronal activity are ubiquitous in animal nervous systems, and partially responsible for generating and controlling rhythmic movements such as locomotion, respiration, swallowing and so on. Neural networks that can produce cyclic patterned outputs without rhythmic sensory or central input are called central pattern generators (CPGs). Although in some lower level invertebrate animals the detailed connectivities among the identified CPG neurons have been both experimentally determined and reconstructed \cite{Syed90,Syed91}, both the anatomic structure and dynamics of the CPG networks in most higher vertebrate animals including human beings still remain largely unknown \cite{MackayLyons02, Marder05, Selverston10}. While the experimental identification of the network connectivities is fundamental for fully understanding the CPG networks \cite{Yuste08}, recent experimental observations \cite{Dickinson92,Meyrand94} suggested that CPGs may be highly flexible, and some of them may even be temporarily formed only before the production of motor activity \cite{Jean01}, making the experimental identification of the architecture of CPGs very difficult. Observable movement features such as symmetry etc. have been used as indirect approaches to infer aspects of CPG structures \cite{Golubitsky99}. In \cite{Zhang13}, we started from admissible cycles (the cycles that can be stored in a network with the pseudoinverse learning rule), and studied structural features of admissible cycles and how structural features determine topology of the networks constructed from admissible cycles. 
%In \cite{Zhang14}, we introduced a novel \emph{Misalignment Length Analysis} (MLA) method for analyzing relaxation dynamics of the networks constructed from admissible cycles and proved that every admissible cycle with intermediate patterns satisfying its transition conditions is weakly retrievable. 
In this paper, following \cite{Zhang13} and \cite{Zhang14}, we continue to study how the structural features of admissible cycles determine the dynamics of the Hopfield-type networks constructed from the admissible cycles with the pseudoinverse learning rule \cite{Personnaz86}.

Hopfield-type networks with delayed couplings are asymmetric generalizations of the Hopfield networks \cite{Hopfield84}, and have been used as models for generating cyclic patterns \cite{Sompolinsky86,Kleinfeld86,Kleinfeld88}. The dynamics of the Hopfield-type networks with discrete/distributed transmission delay(s) has recently attracted considerable research interest from the dynamical systems theory community (see for example \cite{Campbell06, Campbell99, Campbell05, Chen01, Shih06, Shih07, Guo07, Horikawa09a, Pakdaman97} etc.). Due to its broad applications, ring networks, which are rings of unidirectionally or bidirectionally coupled neurons, have been extensively investigated \cite{Pakdaman97,Campbell99,Campbell05,Campbell06,Guo07,Horikawa09a}. In terms of the number of inhibitory connections, unidirectional ring networks can be divided into even and odd networks in accordance to the parity of the number of inhibitory connections \cite{Pasemann95}. It has been well known that odd networks are capable of generating sustained oscillations, and numerical simulations showed that although the theoretical results suggested eventual convergence of any solution trajectory in even networks \cite{Smith95,Guo07,Pakdaman97}, long lasting oscillations can be very easily observed \cite{Pakdaman97}. Pakdaman et al. \cite{Pakdaman97} studied the asymptotic behavior of the system, and showed that the long lasting transient oscillations can not be explained by the analysis of the asymptotic behavior of the network. Horikawa et al and others \cite{Horikawa09a, Horikawa09b} studied different properties of the long lasting transient oscillations, such as the dependence of their durations on the number of neurons, effects of noise and variations on the durations of the long lasting transient oscillations in excitatory ring networks. Campbell and collaborators \cite{Yuan04,Campbell05,Campbell06} studied the stability and bifurcations of both the trivial solution and the nontrivial synchronous and asynchronous periodic solutions bifurcating from the trivial solution in ring networks of bidirectionally coupled neurons. For the general continuous-time Hopfield-type neural networks of $n$ neurons with different activation functions with and without delay, Cheng et al \cite{Shih06,Shih07} formulated the parameter conditions for the existence of the $2^n$ multiple stable stationary solutions, and estimated the basins of attraction of the coexisting multiple stable stationary solutions.

In \cite{Zhang13} we studied the construction of a general class of the Hopfield-type networks from arbitrarily preselected binary cyclic patterns representing phase-locked oscillations \cite{Chen01,Guo07}. We formulated and proved the admissibility condition for cycles under which a Hopfield-type network can be constructed using the pseudoinverse learning rule \cite{Personnaz86}. In terms of the structural features, we divided admissible cycles into three classes: simple cycles, separable and inseparable composite cycles, and showed that in terms of the structural features of admissible cycles, the topology of the networks constructed from them can be determined. Networks constructed from separable cycles consist of isolated clusters, and each cluster corresponds to a simple cycle component. Consequently, the dynamics of such networks is fully determined by that of the isolated clusters respectively.

In \cite{Zhang14}, we demonstrated that due to the misalignment of the zeros of the membrane potentials of the neurons in most networks constructed from admissible cycles, most admissible cycles are retrieved as the (delay-induced) long-lasting transient oscillations. To discriminate between the cycles that can be retrieved as attracting limit cycles and those that can be retrieved as the long-lasting transient oscillations, we introduced the concepts of strong retrievability and weak retrievability. In networks with the parameters $\lambda\rightarrow\infty$ and $C_0=0$, we proved that every admissible cycle with intermediate patterns satisfying its transition conditions is weakly retrievable \cite{Zhang14}.

In this paper, we consider the continuous-time Hopfield-type networks that are constructed from admissible binary cyclic patterns using the pseudoinverse learning rule, and study the retrievability of admissible cycles, as well as the structures of the local bifurcations of the trivial solutions. While retrieval of static binary patterns in both continuous-time and discrete-time Hopfield networks and their extensions have been extensively investigated (e.g. \cite{Agliari13,Tang10}), and studies involving storage and retrieval of oscillatory sequences in recurrent networks have been recently performed (e.g. \cite{Lai04,Tani08}), no systematic investigation of retrievability of cyclic patterns in networks with delay(s) have been done. In this paper, we follow \cite{Zhang14} to discuss retrievability of the admissible cycles in the networks with delayed couplings and more general parameter settings, i.e. $\lambda<\infty$ but still large and $0 < C_0 < 1$. We analyze the linear stability of the trivial solution, and show how the structural features of the prescribed cycle determine structure of the possible local bifurcations of the trivial solution. Accordingly, for each network constructed from an admissible cycle, the complete scenario of all possible local bifurcations of the trivial solution is obtained. With this scenario, we study structures of local bifurcations of the trivial solution and the periodic solutions bifurcating from the trivial solution using the MatLab packages MatCont 3.1 and DDE-BIFTOOL 2.03. We demonstrate that the cyclic patterns prescribed in the corresponding networks are stored and retrieved as different mathematical objects. Depending on their structural features, cyclic patterns are respectively stored and retrieved as attracting limit cycles, unstable periodic solutions bifurcating from the trivial solution, and the long lasting transient oscillations \cite{Pakdaman97,Horikawa09a,Horikawa09b}, which are suggested to be the consequence of the interactions among the unstable periodic solutions and stable/unstable steady state solutions.

The paper is organized as follows. In section 2, after a brief review the main results about admissibility of cycles, classification of admissible cycles, and network topology obtained in \cite{Zhang13}; and \emph{Misalignment Length Analysis} (MLA) method and the weak retrievability of a special class of admissible cycles presented in \cite{Zhang14}, we continue to prove that all admissible cycles satisfying the same transition conditions can be retrieved in the network constructed from the cycle imposing the transition condition and discuss retrievability of the admissible cycles in networks with more general parameter settings, i.e. $\lambda$ finite but sufficiently large and $0<C_0<1$. In section 3, we first present a detailed discussion on the boundaries of the stability region of the trivial solution, which enable us to determine all the possible codimension one local bifurcations of the trivial solution. Then with the numerical continuation and bifurcation analysis MatLab package, MatCont 3.1 and DDE-BIFTOOL 2.03, we continue to discuss structures of the local bifurcations of the trivial solution in networks constructed from admissible cycles with different structural features. We illustrate that admissible cycles are stored and retrieved in the networks with delay as different mathematical objects, attracting limit cycles, unstable periodic solutions and long lasting transient oscillations. In section 4, we discuss the significance and implications of our main results.

%%%%%%%%%%%%%%%%%%%%%%%%%%%%%%%%%%%%%%%%%%%%%%%%%%%%%%%%%%%%%%%%%%%%%%%%%%%%%%%%%%%
\section{Retrievability of Admissible Cycles}
%-------------------------------------------------------------------------------------------------------------
\label{sec:Retrievability}
%%%%%%%%%%%%%%%%%%%%%%%%%%%%%%%%%%%%%%%%%%%%%%%%%%%%%%%%%%%%%%%%%%%%%%%%%%%%%%%%%%%
Before discussing the retrievability of admissible cycles in networks with more general parameter settings, we briefly review the main results obtained in \cite{Zhang13,Zhang14} to provide the necessary background, and prove one important corollary. 
%=============================================================================================================
\subsection{Hopfield-type networks and pseudoinverse learning rule}
%-------------------------------------------------------------------------------------------------------------
\label{subsec:NetworkAndLearningRules}
%=============================================================================================================
In this paper, we consider the continuous-time Hopfield-type networks with delayed couplings of the following form (dots designate time derivatives)
\begin{equation}
	\label{eq:PotentialDelay00}
	\mathbf{\dot{u}}(t) = -\mathbf{u}(t) + C_0\beta_K\mathbf{J^0} \tanh(\lambda\mathbf{u}(t)) + C_1\beta_K\mathbf{J} \tanh(\lambda \mathbf{u}(t - \tau))
\end{equation}
where $\displaystyle{\beta_K = \frac{\mathrm{arctanh}(\beta_1)}{\lambda\beta_1}}$ and $\beta_1\in(0,1)$ is a parameter for incorporating the prescribed cycle of binary patterns (column vectors) into the network \cite{Gencic90,Zhang13}, $\mathbf{J^0}$ and $\mathbf{J}$ are network connectivity matrices for storing individual patterns in the prescribed cycle as fixed points and imposing the transitions between the memory states respectively, $C_0$ and $C_1 = 1 - C_0$ are the two parameters controlling the relative contributions of the two components of the network connectivities $\mathbf{J^0}$ and $\mathbf{J}$ respectively, and $\lambda$ is the gain scaling parameter \cite{Hopfield84}. The $i$-th component $u_i(t)$ of $\mathbf{u}(t) = (u_1(t)$, $u_2(t)$, $\dots$, $u_N(t))^T$ models the membrane potentials of the $i$-th neuron in the network, $\mathbf{I}$ is the $N\times N$ identity matrix, and $\tau\geq 0$ is the time delay. For the convenience of notations, here and subsequently, we follow \cite{Zhang13} and use the notations $f(\mathbf{x})$ $=$ $(f(x_1)$,$f(x_2)$,$\dots$,$f(x_N))^T$ and $\mathbf{x}^n$ $=$ $(x_1^n$, $x_2^n$, $\dots$, $x_N^n)$, where $f=\tanh$ or $\mathrm{arctanh}$, $\mathbf{x}$ $=$ $\lambda\mathbf{u}$ or $\mathbf{v}$ and $n\in\mathbb{N}$. Let $v_i = \tanh(\lambda u_i)$ model the firing rates of the $i$-th neuron $1\leq i\leq n$, then (\ref{eq:PotentialDelay00}) can be written as
\begin{equation}
	\label{eq:RateDelay00}
	\mathbf{\dot{v}}(t) = \lambda(\mathbf{I} - \mathrm{diag}(\mathbf{v}^2(t)))\Big{(}C_0\beta_K\mathbf{J^0}\mathbf{v}(t) + C_1\beta_K\mathbf{J}\mathbf{v}(t - \tau) - \frac{\mathrm{arctanh}(\mathbf{v}(t))}{\lambda}\Big{)}
\end{equation}

Following \cite{Personnaz86, Gencic90, Zhang13}, we denote a cycle of $p$ $N$-dimensional binary patterns by $\Sigma = (\xi^{(1)}$, $\xi^{(2)}$, $\dots$, $\xi^{(p)})$ and the same cycle after cyclically shifting leftwards by one pattern by $\mathbf{F} = (\xi^{(2)}$, $\xi^{(3)}$, $\dots$, $\xi^{(p)}$, $\xi^{(1)})$, where $\xi^{(\mu)}=(\xi_1^{(\mu)},\xi_2^{(\mu)},\dots,\xi_N^{(\mu)})^T$ with $\xi_i^{(\mu)} = 1$ or $-1$, for $1\leq\mu\leq p$ and $1\leq i\leq N$. Thus, in matrix formulation, we have $\mathbf{F} = \Sigma \mathbf{P}$, where
\begin{equation}
	\label{eq:P}
	\mathbf{P} = \left(\begin{array}{cccccc}
		   0   &    0   &    0   & \cdots &    0   &    1   \\
		   1   &    0   &    0   & \cdots &    0   &    0   \\
		   0   &    1   &    0   & \cdots &    0   &    0   \\
		\vdots & \vdots & \vdots & \ddots & \vdots & \vdots \\
		   0   &    0   &    0   & \cdots &    0   &    0   \\
		   0   &    0   &    0   & \cdots &    1   &    0
	\end{array}\right).
\end{equation}
Using the pseudoinverse learning rule \cite{Kohonen76,Personnaz86}, the two components of the network connectivities, $\mathbf{J^0}$ and $\mathbf{J}$, are respectively constructed as
\begin{equation}
	\label{eq:J0}
	\mathbf{J}^0 = \Sigma \Sigma^+
\end{equation}
and
\begin{equation}
	\label{eq:J1}
	\mathbf{J} = \mathbf{F} \Sigma^+
\end{equation}
where $\Sigma^+$ is the Moore-Penrose pseudoinverse of the matrix $\Sigma$. Personnaz et al. \cite{Personnaz86} referred to (\ref{eq:J0}) as the \emph{projection learning rule} and (\ref{eq:J1}) as the \emph{associating learning rule}. The component $\mathbf{J^0}$ serves to store every column vector in $\Sigma$, also called by us ``individual pattern'', as fixed points. If the patterns of $\Sigma$ are mutually orthogonal, the projection rule (\ref{eq:J0}) reduces to the Hebb's rule. The component $\mathbf{J}$ imposes the transitions among patterns prescribed by the cycle $\Sigma$. Personnaz et al. \cite{Personnaz86} pointed out that if $\mathbf{F} \Sigma^+ \Sigma = \mathbf{F}$, the matrix equation $\mathbf{J} \Sigma = \mathbf{F}$ has an exact solution $\mathbf{J} = \mathbf{F}\Sigma^+$.

A cycle $\Sigma$ is said to be \emph{admissible} if it can be stored in a network in the sense that a connectivity matrix $\mathbf{J}$ can be constructed from $\Sigma$ with the pseudoinverse learning rule (\ref{eq:J1}) such that $\mathbf{J}\Sigma = \mathbf{F}$. In \cite{Zhang13}, we have investigated storage of admissible cycles. It was proved that a cycle is admissible if and only if its discrete Fourier transform contains exactly $r = \mathrm{rank}(\Sigma)$ nonzero columns. In terms of the structural features, admissible cycles were classified into three categories, simple cycles, separable and inseparable composite cycles. The topology of the networks constructed from each type of the cycles was analyzed. In the next two subsections, we prove that admissibility implies weak retrievability in the networks with delayed couplings and $\lambda$ sufficiently large, and discuss retrieval of different types of admissible cycles.

%===============================================================================
\subsection{Retrievability of Admissible Cycles in the Networks with $C_0 = 0$}
%-------------------------------------------------------------------------------
\label{subsec:RetrievabilityC0=0}

\noindent\begin{definition}
	\label{def:SatisfyingTransitionCondition00}
		Let $\Sigma = (\xi^{(1)},\dots,\xi^{(p)})$ be an admissible cycle, and $\mathbf{u}(t)$ a solution of the network (\ref{eq:PotentialDelay00}) constructed from $\Sigma$ starting from an initial data $\varphi(\theta) = a\xi^{(\mu)}$ for all $\theta\in[-\tau,0]$, $\xi^{(\mu)}\in\Sigma$ and $a\in\mathbb{R}^+$. We say that $\mathbf{u}(t)$ \textbf{satisfies the transition conditions imposed by $\Sigma$} over an interval $(0,T)$, if the solution $\mathbf{u}(t)$ satisfies
	\begin{equation}
		\mathrm{sign}(\mathbf{u}(t)) = \xi^{((\mu+n+1)\mbox{ mod }p) + 1}\mbox{, for all }t\in\mathcal{I}_n.
		\label{eq:TransCondition}
	\end{equation}
	\label{def:TransCondtion}
\end{definition}

\begin{definition} 
	\label{def:StrongRetrievability}
		An admissible cycle $\Sigma$ is \textbf{strongly retrievable}, if the system (\ref{eq:PotentialDelay00}) has a solution $\mathbf{u}(t)$ such that $\mathbf{u}(t)$ satisfies the transition conditions imposed by $\Sigma$ on $(0,T)$ for all $T>0$.
\end{definition}

\begin{definition}
	An admissible cycle $\Sigma$ is \textbf{weakly retrievable}, if for any integer $m>0$, there exists a $\tau_0>0$, such that the largest number $\hat{n}$ of the prescribed patterns every network constructed from $\Sigma$ with $\tau\geq \tau_0$ is greater than $m$.
\end{definition}

	Next, we restate an important result on weak retrievability of a special class of admissible cycles proved in \cite{Zhang14} in the networks with $C_0 = 0$ and $\lambda\rightarrow\infty$ and prove an important consequence of it. For networks with $C_0 > 0$, the prescribed admissible cycles may or may not be retrieved in the networks constructed from them. We will discuss the retrieval of admissible cycles in such networks in examples in Section~\ref{subsec:RetrievabilityC0>0}. 

\begin{lemma}
	Let $\Delta T_n$ and $\Delta\tilde{T}_n$ be the misalignments of two networks constructed from admissible cycles. If $\Delta T_n \leq \Delta\tilde{T}_n$ for all $n$, and the network associate to $\Delta\tilde{T}_n$ is weakly retrievable, then the network associated to $\Delta T_n$ is weakly retrievable too.
	\label{lem:MisalignmentOrder}
\end{lemma}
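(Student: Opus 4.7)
The plan is to exploit the monotone relationship between the per-step misalignments $\Delta T_n$ and the number $\hat{n}$ of prescribed patterns that can be retrieved before the sign pattern of $\mathbf{u}(t)$ first fails to match the next prescribed pattern in $\Sigma$. Intuitively, since $\Delta T_n$ measures how far the actual zero-crossings of the membrane potentials lag behind the ideal switching times dictated by the transition conditions, a pointwise-smaller misalignment sequence can only delay, not accelerate, the moment when the cumulative misalignment becomes too large for the next transition to complete correctly. The proof proposal is therefore a straightforward comparison argument riding on top of the MLA machinery from \cite{Zhang14}.

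First I would unpack weak retrievability for the $\tilde{T}$-network: given an arbitrary $m>0$, there exists $\tilde{\tau}_0>0$ such that for every delay $\tau\geq\tilde{\tau}_0$ the largest index $\hat{\tilde{n}}(\tau)$ for which the solution still satisfies the transition conditions imposed by $\Sigma$ exceeds $m$. Next I would set $\tau_0:=\tilde{\tau}_0$ and consider the $T$-network driven by the same delay $\tau\geq\tau_0$, with initial history $\varphi(\theta)=a\xi^{(\mu)}$ on $[-\tau,0]$. The core step is to show that if the $\tilde{T}$-trajectory successfully completes the transitions $\xi^{(\mu)}\to\xi^{(\mu+1)}\to\cdots\to\xi^{(\mu+k)}$ for some $k\leq\hat{\tilde{n}}(\tau)$, then so does the $T$-trajectory. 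Using the MLA characterization, the success of the $k$-th transition is equivalent to the accumulated misalignment (or the individual misalignment, depending on which flavor of the MLA condition is being applied) staying below a threshold determined by $\tau$. Because $\Delta T_n\leq\Delta\tilde{T}_n$ for every $n$, any such inequality satisfied by the $\tilde{T}$-sequence is automatically satisfied by the $T$-sequence, so $\hat{n}(\tau)\geq\hat{\tilde{n}}(\tau)>m$. Since $m$ was arbitrary, the $T$-network is weakly retrievable with the same $\tau_0$.

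The main obstacle I expect is the translation step in the middle, namely making the implication ``if the $\tilde{T}$-trajectory realizes the $k$-th transition, then so does the $T$-trajectory'' into a rigorous statement rather than an appeal to intuition. In the MLA framework of \cite{Zhang14}, the misalignments $\Delta T_n$ enter the retrievability criterion monotonically, so the inequality $\Delta T_n\leq\Delta\tilde{T}_n$ should transfer the criterion directly; the care is in noting that both networks are being compared under the same delay $\tau$ and under the same type of initial history, so the MLA inequalities are applied with identical right-hand sides. Once this monotonicity is extracted from the MLA estimates, the remainder of the argument is purely definitional and requires no new analysis of the delay differential equation (\ref{eq:PotentialDelay00}) itself.
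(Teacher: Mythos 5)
The paper does not actually prove Lemma~\ref{lem:MisalignmentOrder}: it is stated here without proof and imported from the companion preprint \cite{Zhang14}, where the Misalignment Length Analysis and the quantities $\Delta T_n$ are defined. So there is no in-paper argument to compare yours against; your proposal has to be judged on its own. Structurally it is the natural (and almost certainly intended) argument: unpack the definition of weak retrievability, reuse the same $\tau_0$, and transfer the retrieval criterion through the pointwise inequality $\Delta T_n\leq\Delta\tilde{T}_n$.

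The genuine gap is the one you flag yourself, and it is not merely a formality. Your entire argument reduces to the claim that ``the $k$-th transition succeeds if and only if the (accumulated) misalignment stays below a threshold determined by $\tau$,'' with that threshold \emph{the same for both networks}. The lemma, however, compares two networks built from possibly \emph{different} admissible cycles, so the failure threshold could in principle depend on cycle-specific data (the period $p$, the number of neurons, the pattern overlaps entering $\mathbf{J}$), in which case $\Delta T_n\leq\Delta\tilde{T}_n$ alone would not yield $\hat{n}(\tau)\geq\hat{\tilde{n}}(\tau)$. Until you state the MLA criterion precisely and verify that the right-hand side of the inequality is network-independent (or at least ordered the right way), the middle implication ``if the $\tilde{T}$-trajectory realizes the $k$-th transition, then so does the $T$-trajectory'' is an appeal to the unstated content of \cite{Zhang14} rather than a proof. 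A second, smaller point: the misalignments $\Delta T_n$ are themselves $\tau$-dependent quantities, so the hypothesis ``$\Delta T_n\leq\Delta\tilde{T}_n$ for all $n$'' must be interpreted as holding for each fixed $\tau\geq\tau_0$ under identical initial histories; you assume this implicitly, but it should be made explicit since otherwise choosing $\tau_0:=\tilde{\tau}_0$ is not justified.
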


\begin{theorem}
	\label{thm:RetrievabilityC0=0}
	Every simple MC-cycle $\Sigma$ with intermediate patterns satisfying the transition conditions imposed by $\Sigma$ is weakly retrievable in the network constructed from it with $\lambda$ sufficiently large.
\end{theorem}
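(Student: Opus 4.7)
The plan is to combine Lemma \ref{lem:MisalignmentOrder} with the weak-retrievability result of \cite{Zhang14} (restated just above) by producing, for every simple MC-cycle $\Sigma$ satisfying the transition-condition hypothesis, a reference cycle $\tilde{\Sigma}$ in the special Zhang14 subclass whose misalignment sequence dominates that of $\Sigma$ pointwise. Weak retrievability then transfers from $\tilde{\Sigma}$ to $\Sigma$ via the lemma, and a $1/\lambda$-continuity argument propagates the statement from the limit $\lambda\to\infty$ to sufficiently large finite $\lambda$.

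First I would fix an initial datum $\varphi(\theta) = a\xi^{(\mu)}$ on $[-\tau,0]$ and apply the MLA framework inherited from \cite{Zhang14}. Using Definition \ref{def:SatisfyingTransitionCondition00}, the hypothesis that the intermediate patterns satisfy the transition conditions means the delayed feedback $\tanh(\lambda\mathbf{u}(t-\tau))$ stabilizes, as $\lambda$ grows, to a fixed shifted element of $\Sigma$ on each interval $\mathcal{I}_n$. The equation (\ref{eq:PotentialDelay00}) therefore becomes a linear ODE whose solution on $\mathcal{I}_n$ is an explicit combination of a constant term (depending on $\mathbf{J}=\mathbf{F}\Sigma^+$ applied to that shifted pattern) and a decaying exponential whose initial value is the left-endpoint value of $\mathbf{u}$. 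From this closed form one reads off the coordinatewise sign-change times and hence the misalignment $\Delta T_n$ as explicit rational functions of the entries of $\mathbf{F}\Sigma^+$.

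Second, I would exploit the structural regularity of a simple MC-cycle, as developed in \cite{Zhang13}, to majorize each $\Delta T_n$ by the corresponding misalignment of a canonical simple MC-cycle $\tilde{\Sigma}$ chosen inside the Zhang14 subclass. Once the pointwise inequality $\Delta T_n \le \Delta\tilde{T}_n$ is in hand, Lemma \ref{lem:MisalignmentOrder} gives weak retrievability of $\Sigma$ in the limit $\lambda\to\infty$. To upgrade to large finite $\lambda$, I would fix $m$, apply the Zhang14 result to find $\tilde{\tau}_0$ retrieving more than $m$ patterns along $\tilde{\Sigma}$, and then take $\lambda$ large enough that the $O(1/\lambda)$ perturbation of the sign-change recursion does not reverse the inequality $\Delta T_n \le \Delta\tilde{T}_n$ over the first $m$ transitions.

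The principal obstacle I expect is the pointwise majorization in the second step: exhibiting a specific $\tilde{\Sigma}$ and proving $\Delta T_n \le \Delta\tilde{T}_n$ coordinate by coordinate. This requires a detailed analysis of $\mathbf{F}\Sigma^+$ within the class of simple MC-cycles, identifying the slowest admissible sign flip in each coordinate and verifying that the transition-condition hypothesis precisely rules out the degenerate cases in which the sign flip never occurs. Everything else — the closed-form integration on each $\mathcal{I}_n$, the invocation of Lemma \ref{lem:MisalignmentOrder}, and the continuity passage from $\lambda=\infty$ to large finite $\lambda$ — should be essentially routine once this comparison step is completed.
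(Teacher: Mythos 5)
You should first be aware that this paper does not contain its own proof of Theorem~\ref{thm:RetrievabilityC0=0}: the text explicitly says the result is \emph{restated} from \cite{Zhang14}, and the only trace of its proof here is the remark in Corollary~\ref{cor:CoexistingCycles00} that the argument uses only the relation $\mathbf{J}\Sigma_i=\Sigma_i\mathbf{P}$. So there is no in-paper proof to compare against; I can only assess your plan on its own terms, and as written it has a structural problem. The theorem \emph{is} the ``weak-retrievability result of \cite{Zhang14}'' for the special class of simple MC-cycles whose intermediate patterns satisfy the transition conditions. Your plan is to prove it by exhibiting, for each such $\Sigma$, a reference cycle $\tilde{\Sigma}$ ``in the special Zhang14 subclass'' with $\Delta T_n\le\Delta\tilde{T}_n$ and then invoking Lemma~\ref{lem:MisalignmentOrder}. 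But any such $\tilde{\Sigma}$ is itself an instance of the statement being proved, so the reduction is circular unless you identify a strictly smaller base class (or a single concrete canonical cycle per $(N,p)$) and prove its weak retrievability directly from the MLA estimates. You never do this; the entire analytic content --- the explicit misalignment recursion, the choice of $\tilde{\Sigma}$, and the pointwise inequality $\Delta T_n\le\Delta\tilde{T}_n$ for all $n$ --- is deferred to ``the principal obstacle I expect.'' Since Lemma~\ref{lem:MisalignmentOrder} compares misalignment sequences of two \emph{different} networks, and these sequences are generated dynamically by the solutions rather than read off statically from $\mathbf{F}\Sigma^+$, this comparison is precisely the hard part, not a routine afterthought.

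A second, independent gap is the quantifier order in your passage from $\lambda=\infty$ to finite $\lambda$. Weak retrievability requires: for the network with a \emph{fixed} $\lambda$, for every $m$ there exists $\tau_0$ such that all $\tau\ge\tau_0$ retrieve more than $m$ patterns. Your argument fixes $m$ first and then chooses $\lambda$ large enough that the $O(1/\lambda)$ perturbation does not spoil the first $m$ transitions; this proves only ``for every $m$ there exists $\lambda(m)$,'' which is weaker than the theorem unless you show the required $\lambda$ can be chosen uniformly in $m$ (and in $\tau$). Given that the misalignments $\Delta T_n$ accumulate over successive intervals, uniformity in $m$ is exactly the point at issue and needs an explicit estimate, not a continuity appeal.
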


%\begin{example}{\rm
%	Consider the admissible cycle and the network constructed from it
%	$$
%		\Sigma = \left(\begin{array}{cccc}
%			 + &  - &  - &  + \\
%			 - &  - &  + &  - \\
%			 - &  + &  - &  + \\
%			 + &  - &  + &  +
%		\end{array}\right)\mbox{, } \mathbf{J} = \Sigma\mathbf{P}\Sigma^+ = \left(\begin{array}{rrrr}
%			 0 &  1 &  1 &  1 \\
%			 0 &  0 &  0 & -1 \\
%			 0 &  0 & -1 &  0 \\
%			-1 &  0 &  1 &  1
%	\end{array}\right).
%$$
%	Clearly, the cycle $\Sigma$ here is an inseparable composite cycle, and the network constructed from it is neither excitatory nor ring network. But simulations show that the prescribed cycle is indeed stored and retrieved as the delay-induced long lasting transient oscillations. Since the longest block in the prescribed cycle $\Sigma$ is of length 3, the network constructed following the procedure described in the proof of the Theorem \ref{thm:RetrievabilityC0=0} is exactly the network in Example \ref{ex:misalignment}. We have already seen that, with $C_0=0$, $\tau=2$, the network can retrieve exactly 13 prescribed patterns. Figure~\ref{fig:Inseparable} shows a simulative solution of the network constructed from the inseparable cycle here. It is straightforward to see that the number of the prescribed patterns the network can retrieve is actually much larger than 13.
%	}
%	\label{ex:Inseparable}
%\end{example}

\begin{corollary}
	\label{cor:CoexistingCycles00}
	All admissible cycles in the set $\{\Sigma_i\in\{-1,1\}^{N\times p}$ $:$ $\mathbf{J}\Sigma_i = \Sigma_i\mathbf{P}$ $\mathrm{with}$ $\mathbf{J}$ $\mathrm{fixed}\}$ can be retrieved in the same network with connectivity matrix $\mathbf{J}$.
\end{corollary}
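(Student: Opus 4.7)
The plan is to reduce the corollary to Theorem \ref{thm:RetrievabilityC0=0} by observing that, in the $C_0=0$ regime, the network dynamics (\ref{eq:PotentialDelay00}) depend on the matrix $\mathbf{J}$ alone, and that the hypothesis of that theorem on a cycle is captured precisely by the relation $\mathbf{J}\Sigma=\Sigma\mathbf{P}$.

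First I would fix a connectivity matrix $\mathbf{J}$ and let $\Sigma_i$ denote an arbitrary admissible cycle in the given set. From the defining equation $\mathbf{J}\Sigma_i=\Sigma_i\mathbf{P}$, reading off the $\mu$-th column yields $\mathbf{J}\xi_i^{(\mu)}=\xi_i^{((\mu \bmod p)+1)}$, so $\mathbf{J}$ simultaneously encodes the prescribed transitions of every $\Sigma_i$ in the set. In particular, each $\Sigma_i$ satisfies the admissibility relation with respect to this single matrix, and the network (\ref{eq:PotentialDelay00}) with $C_0=0$ is the same dynamical system for all choices of $\Sigma_i$.

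Next I would invoke Theorem \ref{thm:RetrievabilityC0=0} for each $\Sigma_i$ separately. The proof of that theorem proceeds via the Misalignment Length Analysis of \cite{Zhang14} together with Lemma \ref{lem:MisalignmentOrder}, both of which only use (i) the transition property $\mathbf{J}\xi^{(\mu)}=\xi^{(\mu+1)}$ on the patterns of the cycle under study and (ii) the saturation of $\tanh$ for $\lambda$ sufficiently large. Since property (i) holds for every $\Sigma_i$ in the set with exactly the same $\mathbf{J}$, the MLA estimates, the misalignment ordering of Lemma \ref{lem:MisalignmentOrder}, and the weak-retrievability conclusion of Theorem \ref{thm:RetrievabilityC0=0} all transfer verbatim to each $\Sigma_i$, yielding retrievability of every $\Sigma_i$ in the common network with matrix $\mathbf{J}$.

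The main obstacle is to ensure that Theorem \ref{thm:RetrievabilityC0=0} is genuinely independent of the particular pseudoinverse construction $\mathbf{J}=\Sigma\mathbf{P}\Sigma^{+}$ from which $\mathbf{J}$ was originally obtained. Different $\Sigma_i$ in the set need not all produce the same matrix under the pseudoinverse rule (which delivers only the minimum-norm solution of $\mathbf{J}\Sigma_i=\Sigma_i\mathbf{P}$), so the argument must be portable across cycles sharing a transition matrix. A careful reading of the MLA proof confirms this portability, since every step of that proof refers to $\mathbf{J}$ only through its images $\mathbf{J}\xi_i^{(\mu)}$ on the patterns of the cycle being retrieved, and no use is made of the auxiliary structure $\mathbf{J}=\Sigma_i\mathbf{P}\Sigma_i^{+}$.
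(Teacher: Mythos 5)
Your proposal is correct and follows essentially the same route as the paper: the paper's own proof likewise observes that the argument for Theorem~\ref{thm:RetrievabilityC0=0} never uses $\Sigma_i=\Sigma$, only the transition relation $\mathbf{J}\Sigma_i=\Sigma_i\mathbf{P}$, so retrievability transfers to every cycle satisfying that relation with the same fixed $\mathbf{J}$. Your additional remark on the portability of the MLA estimates across cycles sharing a transition matrix is a useful elaboration of the same point, not a different argument.
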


\begin{proof}
	Noticing that in the proof of Theorem~\ref{thm:RetrievabilityC0=0}\cite{Zhang14}, although the network is constructed from $\Sigma$, to guarantee that a cycle $\Sigma_i$ is retrievable in the same network, $\Sigma_i$ $=$ $\Sigma$ is not required. Instead, satisfying the transition condition $\mathbf{J}\Sigma_i$ $=$ $\Sigma_i\mathbf{P}$ is the only requirement. Therefore, the assertion follows naturally.
\end{proof}

\begin{remark}{\rm
	For the convenience of discussion, we call the cycles that can be retrieved but are not prescribed in the network the \emph{derived cycles}.

	Although Theorem~\ref{thm:RetrievabilityC0=0} requires $\Sigma$ to be the cycle prescribed in the network, i.e., the network is constructed from it, it is important to notice that $\Sigma$ is not the only cycle that can be retrieved in the network. From Corollary~\ref{cor:CoexistingCycles00}, we have seen that any cycle satisfying the transition condition imposed by $\Sigma$ can be retrieved in the network constructed from $\Sigma$. In the next example, we demonstrate that in the network constructed from a randomly chosen prescribed admissible simple cycle \cite{Zhang13}
\begin{equation}
	\label{eq:PrescribedCycle00}
	\Sigma = \left(\begin{array}{cccccc}
		+ & + & - & + & - & - \\
		+ & - & + & - & - & + \\
		- & + & - & - & + & + \\
		+ & - & - & + & + & - \\
		- & - & + & + & - & +
	\end{array}\right),
\end{equation}	
	in addition to the prescribed cycle (\ref{eq:PrescribedCycle00}), three other cycles can be successfully retrieved too.
	}
	\label{rem:Remark4}
\end{remark}

\begin{example}
	\label{ex:Example01}{\rm 
	Consider the network constructed from the cycle (\ref{eq:PrescribedCycle00}) using the pseudoinverse learning rule. The connectivity matrices $\mathbf{J^0}$ and $\mathbf{J}$ are respectively constructed as  $\mathbf{J^0} = \Sigma\Sigma^+$ and $\mathbf{J} = \Sigma\mathbf{P}\Sigma^+$. Since if a cycle is retrievable in a network, the cycle itself has to satisfy the transition conditions prescribed in the network, to reveal how many cycles satisfy the transition conditions imposed by the cycle $\Sigma$, we investigate the evolutions of all binary state patterns $\xi\in\{-1,1\}^5$ under the transition operation $\xi\mapsto\mathrm{sgn}(\mathbf{J}\xi)$. In Figure~\ref{fig:Figure4}, we illustrate the evolution graphs of the binary state patterns. Following \cite{Personnaz86}, we represent each binary column vectors $\xi$ $\in$ $\{-1,1\}^N$ by a decimal integer. Before converting binary vectors to decimal integers, the digit $-1$ in $\xi$ is replaced by $0$. For instance, $\xi$ $=$ $(+,+,-,+,-)^T$ is replaced by $(1,1,0,1,0)$ first, then converted to $2^4$ $+$ $2^3$ $+$ $2^1$ $=$ $16$ $+$ $8$ $+$ $2$ $=$ $26$. 

	The evolution graphs exhibit four loops. Direct calculations show that these four loops correspond to four cycles satisfying the transition conditions imposed by the prescribed cycle $\Sigma$. Labeling these four cycles by $\Sigma_1$, $\Sigma_2$, $\Sigma_3$ and $\Sigma_4$ respectively (see Figure~\ref{fig:Figure4}) yields the four equalities, $\mathbf{J}\Sigma_i = \Sigma_i\mathbf{P}$, $i=1,2,3,4$, where $\mathbf{P}$ is the cyclic permutation matrix of the form (\ref{eq:P}). For $i\not=4$, $\mathbf{P}$ is of order $6$, and for $i=4$, $\mathbf{P}$ is of order $2$. Clearly, the cycle $\Sigma_3$ is the prescribed cycle $\Sigma$.

	Figure~\ref{fig:Figure5}\textbf{A} and \textbf{B} respectively illustrate the retrieved time series in raster plots of the four cycles and the projections of their phase trajectories onto the three-dimensional $(u_2,u_3,u_4)$ phase subspace. Both illustrations suggest that the four cycles are retrieved successfully. This confirms Corollary~\ref{cor:CoexistingCycles00}.
	}
\end{example}

%%%%%%%%%%%%%%%%%%%%%%%%%%%%%%%%%%%%%%%%%%%%%%%%%%%%%%%%%%%%%%%%%%%%%%%%%%
\begin{figure}
	\begin{center}
		\includegraphics[height=2.5in]{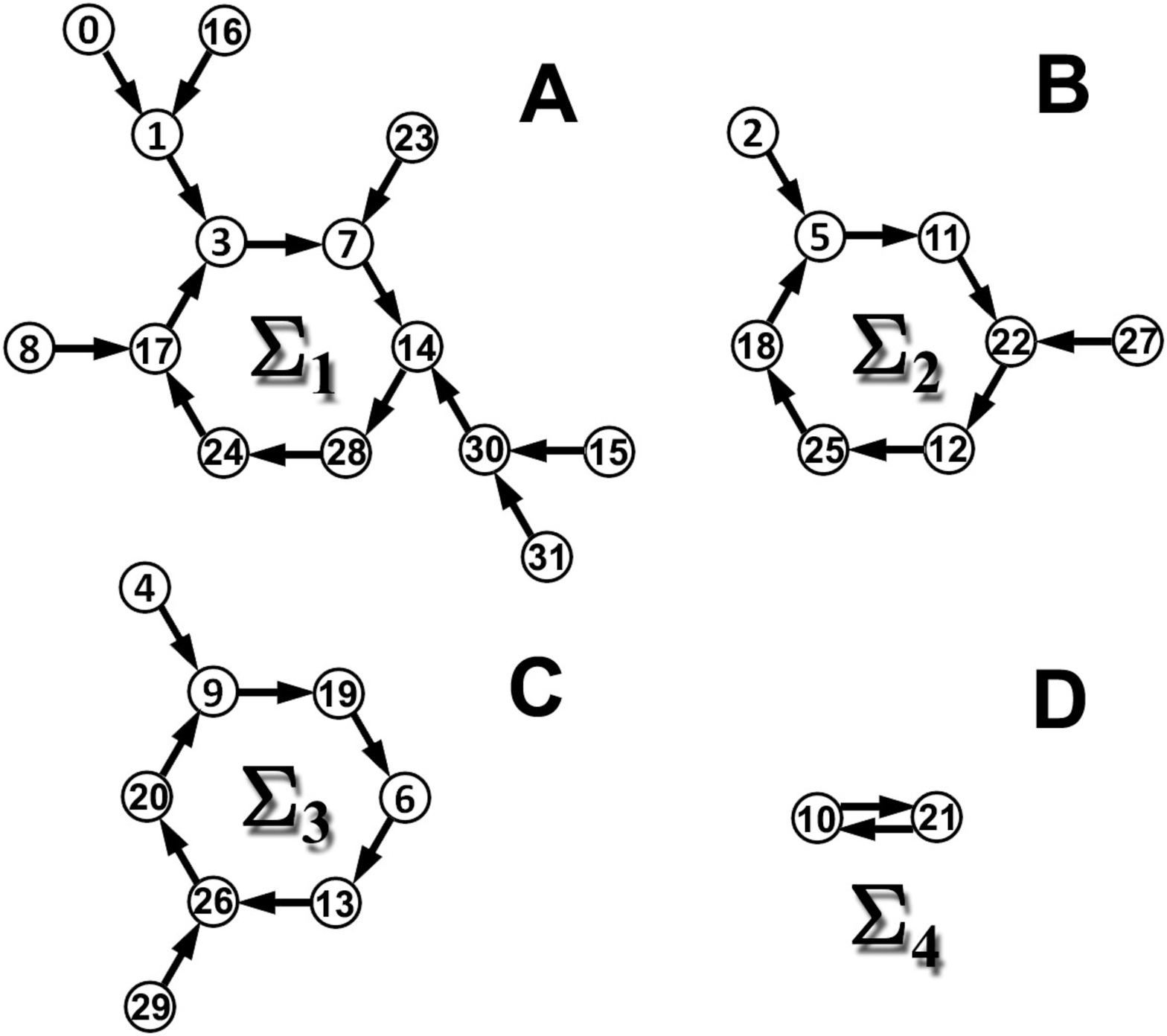}
	\end{center}
	\caption{\footnotesize{Graphs of evolution of the state patterns $\xi^{(\mu)}$ in the state space $\{-1,1\}^5$ under the transition operation $\xi^{(\mu)}\mapsto\mathrm{sgn}(\mathbf{J}\xi^{(\mu)})$, where $\mathbf{J}$ is constructed from the cycle (\ref{eq:PrescribedCycle00}) using the Pseudoinverse learning rule.  (details refer to the text).}}
	\label{fig:Figure4}
\end{figure}
%%%%%%%%%%%%%%%%%%%%%%%%%%%%%%%%%%%%%%%%%%%%%%%%%%%%%%%%%%%%%%%%%%%%%%%%%%

%%%%%%%%%%%%%%%%%%%%%%%%%%%%%%%%%%%%%%%%%%%%%%%%%%%%%%%%%%%%%%%%%%%%%%%%%%
\begin{figure}
	\begin{center}
		\includegraphics[height=2.5in]{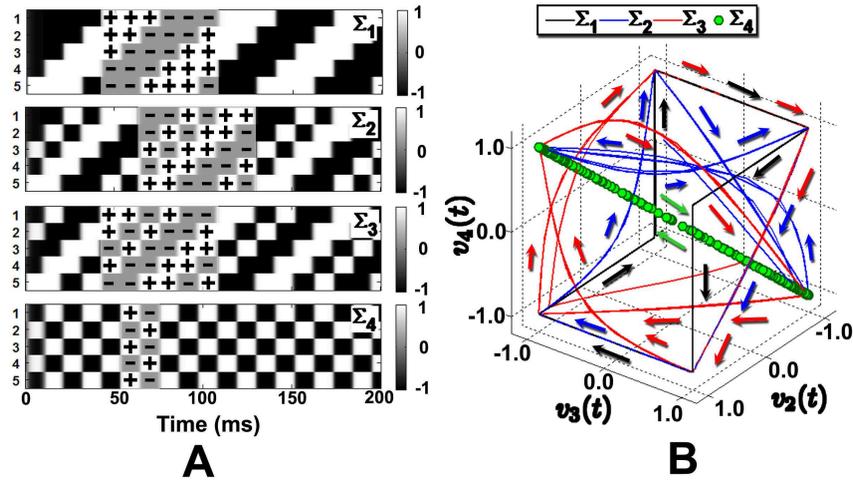}
	\end{center}
	\caption{\footnotesize{Retrieval of the 4 coexisting derived cycles. \textbf{A} Raster plots of the time series of the four retrieved cycles, $\Sigma_1$, $\Sigma_2$, $\Sigma_3$, and $\Sigma_4$. The network is constructed from the cycle (\ref{eq:PrescribedCycle00}). Clearly, the prescribed cycle $\Sigma = \Sigma_3 = -\Sigma_2$, and $\Sigma_1$ $=$ $(\hat{\eta}^T$, $(\hat{\eta}\mathbf{P})^T$, $\dots$, $(\hat{\eta}\mathbf{P}^5)^T)^T$, with $\hat{\eta} = (+,+,+,-,-,-)$. The cycle $\Sigma_4$ is of length two and $\mathbf{J}\Sigma_4 = \Sigma_4\mathbf{P} = -\Sigma_4$. The $i$-th row in each raster plot represents the firing rates $v_i(t)$ of the $i$-th neuron. The parameters are set as $\beta=3$, $C_0 = 0$, $\tau = 10\mathrm{ms}$, and $\lambda = 20$. \textbf{B} Projections of the phase trajectories of the four cycles onto the $(u_2,u_3,u_4)$ phase subspace.}}
	\label{fig:Figure5}
\end{figure}
%%%%%%%%%%%%%%%%%%%%%%%%%%%%%%%%%%%%%%%%%%%%%%%%%%%%%%%%%%%%%%%%%%%%%%%%%%

In next subsection, we discuss retrieval of prescribed and derived admissible cycles in the networks with $C_0>0$.

%===============================================================================
\subsection{Retrievability of Admissible Cycles in the Networks with $C_0 > 0$}
%-------------------------------------------------------------------------------
\label{subsec:RetrievabilityC0>0}
%===============================================================================
In general, the connectivity of a network (\ref{eq:PotentialDelay00}) contains two components \cite{Gencic90,Zhang13}, $\mathbf{J^0}$ and $\mathbf{J}$. One is for stabilizing individual patterns in $\Sigma$ as fixed points of the system, and the other is for imposing the transitions among patterns prescribed by the cycle $\Sigma$. We use the two parameters $C_0$ and $C_1 = 1 - C_0$ to control the relative contributions of these two components in shaping the dynamics of the network. In Section 2.2, we proved in Theorem \ref{thm:RetrievabilityC0=0} that when only the transition component $\mathbf{J}$ is included, i.e. $C_0 = 0$, every prescribed cycle can be retrieved with both $\lambda$ and $\tau$ sufficiently large. For $C_0 = 1$, transition conditions imposed by the prescribed cycle $\Sigma$ are completely removed. Consequently, no cycle is stored in such networks \cite{Personnaz86}, and we will not consider this type of networks here. In this subsection, we consider networks with $0<C_0<1$, and discuss retrieval of the prescribed cycles.

\begin{theorem}
	\label{thm:RetrievabilityC0>0}
	Suppose a network of the form (\ref{eq:PotentialDelay00}) is constructed from an admissible cycle $\Sigma$ $=$ $(\xi^{(1)}$, $\xi^{(2)}$, $\dots$, $\xi^{(p)})$, and in the $(n+1)$-th time interval $[n\tau,(n+1)\tau)$, the solution of the network (\ref{eq:PotentialDelay00}) satisfies $\mathrm{sgn}(\mathbf{u}(t)) = \xi^{(\mu)}$ for some $1\leq\mu\leq p$, then in the next time interval $t\in[(n+1)\tau,(n+2)\tau)$, $\beta_K\beta_1\xi^{(\mu+1)}$ is an asymptotically stable equilibrium point of (\ref{eq:PotentialDelay00}).
\end{theorem}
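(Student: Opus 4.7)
The plan is to apply the method of steps: on $[(n+1)\tau,(n+2)\tau)$ the delayed argument $t-\tau$ lies in $[n\tau,(n+1)\tau)$, so $\mathbf{u}(t-\tau)$ is a known function inherited from the previous interval and (\ref{eq:PotentialDelay00}) becomes a (non-autonomous) ODE on the current interval. Reading the hypothesis inductively---assuming the previous interval has been long enough for $\mathbf{u}$ to relax to its own fixed point $\beta_K\beta_1\xi^{(\mu)}$, which is compatible with $\mathrm{sgn}(\mathbf{u})=\xi^{(\mu)}$---the delayed feedback $C_1\beta_K\mathbf{J}\tanh(\lambda\mathbf{u}(t-\tau))$ freezes to the constant vector $C_1\beta_K\beta_1\mathbf{J}\xi^{(\mu)}$. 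My task then reduces to verifying that $\mathbf{u}^{*}=\beta_K\beta_1\xi^{(\mu+1)}$ is a zero of the resulting autonomous right-hand side and is linearly stable.

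For the equilibrium verification I would chain three algebraic identities. First, admissibility gives $\mathbf{J}\Sigma=\mathbf{F}=\Sigma\mathbf{P}$, hence $\mathbf{J}\xi^{(\mu)}=\xi^{(\mu+1)}$, so the delayed feedback becomes $C_1\beta_K\beta_1\xi^{(\mu+1)}$. Second, since $\mathbf{J^0}=\Sigma\Sigma^{+}$ is the orthogonal projector onto $\mathrm{range}(\Sigma)$ and $\xi^{(\mu+1)}$ is a column of $\Sigma$, we have $\mathbf{J^0}\xi^{(\mu+1)}=\xi^{(\mu+1)}$. Third, since $\xi^{(\mu+1)}\in\{-1,+1\}^{N}$ and $\tanh$ is odd, $\tanh(\lambda\mathbf{u}^{*})=\tanh(\mathrm{arctanh}(\beta_1)\xi^{(\mu+1)})=\beta_1\xi^{(\mu+1)}$. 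Plugging $\mathbf{u}^{*}$ into the frozen right-hand side and collecting, everything is a multiple of $\beta_K\beta_1\xi^{(\mu+1)}$ with scalar coefficient $-1+C_{0}+C_{1}=0$, confirming the equilibrium.

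For asymptotic stability I would linearize at $\mathbf{u}^{*}$. Only the instantaneous term $C_{0}\beta_K\mathbf{J^0}\tanh(\lambda\mathbf{u})$ contributes to the Jacobian, and since $\mathrm{sech}^{2}$ is even and $|\lambda u^{*}_{i}|=\mathrm{arctanh}(\beta_1)$ for every $i$, the diagonal matrix $\mathrm{diag}(\mathrm{sech}^{2}(\lambda\mathbf{u}^{*}))$ collapses to the scalar $(1-\beta_1^{2})\mathbf{I}$. Using $\lambda\beta_K=\mathrm{arctanh}(\beta_1)/\beta_1$ the linearization is
\[
\mathbf{A} \;=\; -\mathbf{I} \,+\, C_{0}\,\frac{(1-\beta_1^{2})\,\mathrm{arctanh}(\beta_1)}{\beta_1}\,\mathbf{J^0}.
\]
Because $\mathbf{J^0}$ is an orthogonal projector, its spectrum is $\{0,1\}$ and the spectrum of $\mathbf{A}$ is $\{-1,\;-1+C_{0}(1-\beta_1^{2})\mathrm{arctanh}(\beta_1)/\beta_1\}$. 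An elementary calculus check (Taylor expansion near $\beta_1=0$ together with the vanishing of $(1-\beta_1^{2})\mathrm{arctanh}(\beta_1)$ at $\beta_1\to 1$, plus monotonicity in between) shows $(1-\beta_1^{2})\mathrm{arctanh}(\beta_1)/\beta_1<1$ throughout $(0,1)$; combined with $C_{0}\in(0,1)$ this forces every eigenvalue of $\mathbf{A}$ strictly negative.

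The main obstacle is conceptual rather than computational: the hypothesis only pins down $\mathrm{sgn}(\mathbf{u})$ on the preceding interval, whereas the conclusion treats $\mathbf{u}^{*}$ as an equilibrium of (\ref{eq:PotentialDelay00}) even though the full DDE does not admit $\mathbf{u}^{*}$ as a genuine fixed point unless $\mathbf{u}(t-\tau)\equiv\beta_K\beta_1\xi^{(\mu)}$. The statement must therefore be understood as an assertion about the frozen ODE produced at a single step of the method of steps, and to make the reduction rigorous one needs the inductive ingredient that the prior interval has actually relaxed onto its fixed point. Given that ingredient, the rest is a routine exploitation of the projection identity $\mathbf{J^0}\xi^{(\mu+1)}=\xi^{(\mu+1)}$ and the associating identity $\mathbf{J}\xi^{(\mu)}=\xi^{(\mu+1)}$.
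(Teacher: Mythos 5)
Your proposal is correct and follows essentially the same route as the paper: freeze the delayed term at $\beta_K\beta_1\xi^{(\mu)}$ (the paper justifies this relaxation step by appeal to its Lemma~1), use $\mathbf{J}\xi^{(\mu)}=\xi^{(\mu+1)}$ and the projector property of $\mathbf{J^0}$ to verify the equilibrium by direct substitution, and then linearize to get $\mathbf{A}=C_0\beta(1-\beta_1^2)\mathbf{J^0}-\mathbf{I}$ with spectrum $\{-1,\,-1+C_0(1-\beta_1^2)\mathrm{arctanh}(\beta_1)/\beta_1\}$, concluding via the monotonicity of $(1-\beta_1^2)\mathrm{arctanh}(\beta_1)/\beta_1$ on $(0,1)$. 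Your explicit flagging of the gap between the sign hypothesis and the assumption $\mathbf{u}(t-\tau)\equiv\beta_K\beta_1\xi^{(\mu)}$ is a fair observation, but the paper resolves it the same way you propose.
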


\begin{proof}
	Since $\mathrm{sgn}(\mathbf{u}(t)) = \xi^{(\mu)}$ for all $t\in[n\tau,(n+1)\tau)$, following from Lemma 1, we may assume $\mathbf{v}(t)$ $=$ $\beta_1\xi^{(\mu)}$, or equivalently, $\mathbf{u}(t)$ $=$ $\mathrm{arctanh}(\beta_1\xi^{(\mu)})/\lambda$ $=$ $\beta_K\beta_1\xi^{(\mu)}$. Thus, constraining ourselves in the $(n+2)$-th time interval, i.e., $t\in[(n+1)\tau,(n+2)\tau)$, and substituting $\mathbf{u}(t-\tau)$ $=$ $\beta_K\beta_1\xi^{(\mu)}$ into the network (\ref{eq:PotentialDelay00}), we obtain a nonlinear system of ordinary different equations
	$$
		\mathbf{\dot{u}}(t) = -\mathbf{u}(t) + C_0\beta_K\mathbf{J^0}\tanh(\lambda\mathbf{u}(t)) + C_1\beta_K\mathbf{J}\beta_1\xi^{(\mu)}.
	$$
	Since $\mathbf{J}\xi^{(\mu)} = \xi^{(\mu+1)}$ \cite{Gencic90,Zhang13}, we have 
	\begin{equation}
		\label{eq:RetrieveODE00}
		\mathbf{\dot{u}}(t) = -\mathbf{u}(t) + C_0\beta_K\mathbf{J^0}\tanh(\lambda\mathbf{u}(t)) + C_1\beta_K\beta_1\xi^{(\mu+1)}.
	\end{equation}
	Suppose $\mathbf{u^*}$ is an equilibrium solution of (\ref{eq:RetrieveODE00}); substituting $\mathbf{u}(t) = \mathbf{u^*}$ into (\ref{eq:RetrieveODE00}) gives
	$$
		\mathbf{u^*} = C_0\beta_K\mathbf{J^0}\tanh(\lambda\mathbf{u^*}) + C_1\beta_K\beta_1\xi^{(\mu+1)}
	$$
	Let $\mathbf{v^*} = \tanh(\lambda\mathbf{u^*})$, then
	$$
		\frac{1}{\lambda}\mathrm{arctanh}(\mathbf{v^*}) = C_0\beta_K\mathbf{J^0}\mathbf{v^*} + C_1\beta_K\beta_1\xi^{(\mu+1)}.
	$$
	Direct substitution shows that $\mathbf{v^*} = \beta_1\xi^{(\mu+1)}$ is a solution of the above equations. This shows that $\mathbf{u^*}$ $=$ $\beta_K\beta_1\xi^{(\mu+1)}$ is an equilibrium solution of (\ref{eq:RetrieveODE00}). 
	
	Next, we show that the equilibrium solution $\mathbf{u^*} = \beta_K\beta_1\xi^{(\mu+1)}$ is asymptotically stable on $[(n+1)\tau,(n+2)\tau)$. Let $\mathbf{u}(t)$ be a perturbed solution around the equilibrium solution $\mathbf{u^*}$, that is, $\mathbf{u}(t) = \mathbf{u^*} + \delta\mathbf{u}(t)$. Thus, $\mathbf{\dot{u}}(t) = \delta\mathbf{\dot{u}}(t)$. Linearizing the right hand side of the inhomogeneous system (\ref{eq:RetrieveODE00}) around $\mathbf{u^*}$ and substituting $\mathbf{u^*} = \beta_K\beta_1\xi^{(\mu+1)}$ into the linearized system yield
	$$
		\delta\mathbf{\dot{u}}(t) = C_0\beta\mathbf{J^0}\Xi\delta\mathbf{u}(t) - \delta\mathbf{u}(t),
	$$
	where $\beta$ $=$ $\beta_K\lambda$, and $\Xi$ $=$ $\mathrm{diag}(1$ $-$ $\tanh^2(\lambda\beta_K\beta_1\xi_1^{(\mu+1)})$, $\dots$, $1$ $-$ $\tanh^2(\lambda$ $\beta_K\beta_1\xi_N^{(\mu+1)}))$. Since $\xi_i^{(\mu+1)}$ $\in$ $\{-1,1\}$ for every $i$, and $\lambda\beta_K\beta_1$ $=$ $\mathrm{arctanh}(\beta_1)$, it follows that $\Xi$ $=$ $(1 - \beta_1^2)\mathbf{I}$. Therefore, we get
	\begin{equation}
		\label{eq:LinearizedRetrieveODE00}
		\delta\mathbf{\dot{u}}(t) = \mathbf{A}\delta\mathbf{u}(t),
	\end{equation}
	where $\mathbf{A} = C_0\beta(1-\beta_1^2)\mathbf{J^0} - \mathbf{I}$. Since the connectivity matrix $\mathbf{J^0}$ and the identity matrix $\mathbf{I}$ commute, they are simultaneously diagonalizable. That is, if $\mathbf{Q}$ diagonalizes $\mathbf{A}$, i.e. $\mathbf{Q}^{-1}\mathbf{A}\mathbf{Q}$ $=$ $\Lambda$ $=$ $\mathrm{diag}(\sigma_1$, $\sigma_2$, $\dots$, $\sigma_N)$, with $\Re(\sigma_1)$ $\geq$ $\Re(\sigma_2)$ $\geq$ $\cdots$ $\geq$ $\Re(\sigma_N)$, where $\Re(x)$ designates the real part of the complex number $x$, then 
	\begin{equation}
		\label{eq:ALambda00}
		\Lambda = C_0\beta(1-\beta_1^2)\mathbf{Q}^{-1}\mathbf{J^0}\mathbf{Q} - \mathbf{I}.
	\end{equation}
	Since $\mathbf{J^0} = \Sigma\Sigma^+$ is idempotent, the eigenvalues of $\mathbf{J^0}$ are either 0 or 1. Thus, 
	$$
		\sigma_i = C_0\beta(1-\beta_1^2) - 1\mbox{ or }-1
	$$
	Since $\displaystyle{\beta = \frac{\mathrm{arctanh}(\beta_1)}{\beta_1}}$, and $0<\beta_1<1$, it follows that
	\begin{equation}
		\label{eq:RetrieveEigenvalues00}
		\sigma_i = C_0\frac{\mathrm{arctanh}(\beta_1)(1-\beta_1^2)}{\beta_1} - 1\mbox{ or }-1
	\end{equation}
	Since $C_0\leq 1$ and $\displaystyle{\frac{\mathrm{arctanh}(\beta_1)(1-\beta_1^2)}{\beta_1}}$ is a monotonically decreasing function of $\beta_1$ over the open interval $(0,1)$ and 
	$$
		\lim\limits_{\beta_1\rightarrow 0}\frac{\mathrm{arctanh}(\beta_1)(1-\beta_1^2)}{\beta_1} = 1
	$$
	and
	$$
		\lim\limits_{\beta_1\rightarrow 1}\frac{\mathrm{arctanh}(\beta_1)(1-\beta_1^2)}{\beta_1} = 0,
	$$
	it follows that $\Re(\lambda_i)<0$ for all $i$. This shows that $\mathbf{u^*} = \beta_K\beta_1\xi^{(\mu+1)}$ is asymptotically stable over the interval $t\in[(n+1)\tau,(n+2)\tau)$, hence completes the proof.
\end{proof}

	\bigskip

	Suppose the pattern $\xi^{(\mu)}$ has been retrieved successfully in the $(n+1)$-th time interval, i.e. $\mathrm{sgn}(\mathbf{u}(t)) = \xi^{(\mu)}$ for $t\in[n\tau,(n+1)\tau)$ neglecting the transient transitions. Substitute $\mathbf{u}(t-\tau) = \beta_K\beta_1\xi^{(\mu)}$ into (\ref{eq:PotentialDelay00}) and consider the continuity of the solution at $t$ $=$ $(n+1)\tau$, we obtain the following derived initial value problem as follows
	\begin{equation}
		\label{eq:IVPODE00}
			\begin{cases}
				\begin{array}{ccl}
					\mathbf{\dot{u}}(t)   & = & -\mathbf{u}(t) + C_0\beta_K\mathbf{J^0}\tanh(\lambda\mathbf{u}(t)) + C_1\beta_K\beta_1\xi^{(\mu+1)} \\
					\mathbf{u}((n+1)\tau) & = & \beta_K\beta_1\xi^{(\mu)}
				\end{array}
			\end{cases}.
	\end{equation}
	Theorem \ref{thm:RetrievabilityC0>0} guarantees that in the next time interval, $\mathbf{u^*} = \beta_K\beta_1\xi^{(\mu+1)}$ is an asymptotically stable equilibrium point. Therefore, if in the $(n+1)$-th time interval, the solution $\mathbf{u}(t)$ entered the basin of attraction of $\mathbf{u^*}$ in the $(n+2)$-th time interval, then $\mathbf{u}(t)$ would be attracted to $\mathbf{u}^*$ in the $(n+2)$-th time interval. Accordingly, the next pattern $\xi^{(\mu+1)}$ in the cycle $\Sigma$ would be retrieved successfully. Next we follow Cheng et al. \cite{Shih06,Shih07} to adopt a similar geometric-based method to analyze the basin of attraction of the stable equilibria of the above derived nonlinear system (\ref{eq:IVPODE00}). 
	
	Let the right-hand side of (\ref{eq:IVPODE00}) be denoted by $\mathbf{f}(\mathbf{u}) = (f_1(\mathbf{u}),\dots,f_N(\mathbf{u}))^T$, where for every $i=1,2,\dots,N$,
	\begin{equation}
		\label{eq:RHSODE00}
			f_i(\mathbf{u}) = -u_i + C_0\beta_K\sum\limits_{j=1}^N J_{ij}^0\tanh(\lambda u_j) + C_1\beta_K\beta_1\xi_i^{(\mu+1)}.
	\end{equation}
	
	Next, we prove two simple but useful results.
	
	\begin{lemma}
		Let $\mathbf{J^0}$ be constructed from any admissible cycle $\Sigma\in\{-1,1\}^{N\times p}$ using the pseudoinverse learning rule (\ref{eq:J0}). Then $J_{ii}^0\geq 0$ for every $i=1,2,\dots,N$.
		\label{lem:J0iiNonNegative00}
	\end{lemma}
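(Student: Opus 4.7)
The key observation is that $\mathbf{J^0} = \Sigma\Sigma^+$ is the orthogonal projection matrix onto the column space of $\Sigma$. I would exploit the two defining properties of this projection, namely symmetry and idempotence, both of which are immediate consequences of the Moore--Penrose axioms for $\Sigma^+$.

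First I would record that $(\Sigma\Sigma^+)^T = \Sigma\Sigma^+$ (one of the Penrose conditions), so $\mathbf{J^0}$ is symmetric, i.e. $J^0_{ji} = J^0_{ij}$ for all $i,j$. The paper already notes in the proof of Theorem~\ref{thm:RetrievabilityC0>0} that $\mathbf{J^0}$ is idempotent, $\mathbf{J^0}\mathbf{J^0} = \mathbf{J^0}$, and this also follows from the Penrose identity $\Sigma^+\Sigma\Sigma^+ = \Sigma^+$.

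Combining these two facts, for each $i$ I would simply compute the diagonal entry from the idempotence relation:
\[
J^0_{ii} = (\mathbf{J^0}\mathbf{J^0})_{ii} = \sum_{j=1}^N J^0_{ij} J^0_{ji} = \sum_{j=1}^N (J^0_{ij})^2 \geq 0,
\]
where the second equality uses symmetry. This completes the proof. There is essentially no obstacle here: the statement is a standard property of any orthogonal projection, and the only thing one needs to verify is that the construction $\mathbf{J^0} = \Sigma\Sigma^+$ produces a genuine orthogonal projection, which is exactly what the Penrose axioms guarantee.
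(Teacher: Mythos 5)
Your proof is correct, but it takes a different route from the paper's. The paper writes down the singular value decomposition $\Sigma = U\Lambda V^*$, observes that $\mathbf{J^0} = \Sigma\Sigma^+ = U\Lambda\Lambda^+U^*$ with $\Lambda\Lambda^+ = \mathrm{diag}(1,\dots,1,0,\dots,0)$, and reads off $J^0_{ii} = \sum_{j=1}^{k}\|U_{ij}\|^2 \geq 0$ where $k=\mathrm{rank}(\Sigma)$. You instead invoke only two of the Penrose axioms to get symmetry and idempotence of $\mathbf{J^0}$ and conclude $J^0_{ii} = \sum_{j}(J^0_{ij})^2 \geq 0$. Both arguments ultimately rest on the same fact --- that $\mathbf{J^0}$ is an orthogonal projection, whose diagonal entries are sums of squares --- but yours is the more elementary and self-contained version, needing no decomposition of $\Sigma$ at all. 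What the paper's SVD route buys in exchange is an explicit formula for $J^0_{ii}$ in terms of the left singular vectors and, as a byproduct, the fact that the eigenvalues of $\mathbf{J^0}$ are exactly $0$ and $1$, which the authors reuse in the stability argument of Theorem~\ref{thm:RetrievabilityC0>0}; your identity $J^0_{ii}=\sum_j (J^0_{ij})^2$ yields the slightly sharper bound $J^0_{ii}\geq (J^0_{ii})^2$, i.e.\ $0\leq J^0_{ii}\leq 1$, essentially for free.
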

	\begin{proof}
		Let $\Sigma = U\Lambda V^*$ be the singular value decomposition of $\Sigma$, where $\Lambda$ is an $N\times p$ diagonal matrix, and $V^*$ is the adjoint of $V$. Then $\Sigma^+ = V\Lambda^+ U^*$, where $\Lambda^+$ is the $p\times N$ diagonal matrix obtained from $\Lambda$ by taking the reciprocal of each non-zero element on the diagonal of $\Lambda$, leaving the zeros in place, and transposing the resulting matrix. Thus, if $\Lambda$ $=$ $\mathrm{diag}(\sigma_1$, $\sigma_2$, $\dots$, $\sigma_k$, $0$, $\dots$, $0)$ with $\sigma_i\not=0$ for every $i\leq k$, and $k\leq\min\{N,p\}$, then $\Lambda^+$ $=$ $\mathrm{diag}(\sigma_1^{-1}$, $\sigma_2^{-1}$, $\dots$, $\sigma_k^{-1}$, $0$, $\dots$, $0)$ and $\Lambda\Lambda^+$ $=$ $\mathrm{diag}(1$, $1$, $\dots$, $1$, $0$, $\dots$, $0)$. Since $\mathbf{J^0}$ $=$ $\Sigma\Sigma^+$ $=$ $U\Lambda\Lambda^+ U^*$, it follows that $J_{ii}^0$ $=$ $\sum\limits_{j=1}^k U_{ij}U_{ji}^*$, where $U_{ij}$ and $U_{ji}^*$ are the entries in the place $(i,j)$ and $(j,i)$ in the matrices $U$ and $U^*$ respectively. Since $U_{ji}^*$ $=$ $(U_{ij})^*$, it follows that $U_{ij}U_{ji}^*$ $=$ $\|U_{ij}\|^2$, where $\|U_{ij}\|$ is the modulus of the complex number $U_{ij}$. Therefore, we have that
		$$
			J_{ii}^0 = \sum\limits_{j=1}^k \|U_{ij}\|^2.
		$$
		Since $\|U_{ij}\|^2 \geq 0$ for every $j\leq k$, it follows that $J_{ii}^0 \geq 0$ for every $i$.
	\end{proof}
	
	\begin{lemma}
		\label{lem:TwoTurningPoints00}
		Let $i=1,\dots,N$ be fixed. If $C_0\beta J_{ii}^0 > 1$, then there exist two values $p_i$ and $q_i$ of the $i$-th component $u_i$ of the membrane potential function $\mathbf{u}$ with $p_i < 0 < q_i$, such that $\partial_i f_i(\mathbf{u})|_{u_i = p_i} = 0$ and $\partial_i f_i(\mathbf{u})|_{u_i = q_i} = 0$ for $i=1,\dots,N$.
	\end{lemma}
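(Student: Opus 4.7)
The plan is a direct calculus argument from the explicit formula (\ref{eq:RHSODE00}) for $f_i(\mathbf{u})$. Examining that expression, the only summand that depends on the variable $u_i$ is $-u_i + C_0\beta_K J_{ii}^0 \tanh(\lambda u_i)$ (every other term involves $u_j$ with $j\neq i$, or is a constant). Consequently,
$$\partial_i f_i(\mathbf{u}) \;=\; -1 + C_0\beta_K\lambda J_{ii}^0\,\mathrm{sech}^2(\lambda u_i),$$
which depends on $\mathbf{u}$ only through the single variable $u_i$. Writing $\beta = \beta_K\lambda$ and using $\mathrm{sech}^2(x) = 1 - \tanh^2(x)$, the equation $\partial_i f_i(\mathbf{u}) = 0$ becomes
$$\tanh^2(\lambda u_i) \;=\; 1 - \frac{1}{C_0\beta J_{ii}^0}.$$

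Next, I would invoke the hypothesis $C_0\beta J_{ii}^0 > 1$ to guarantee that the right-hand side lies in the open interval $(0,1)$, so that taking square roots and applying $\mathrm{arctanh}$ is legitimate. This yields exactly two real solutions, symmetric about zero,
$$u_i \;=\; \pm\frac{1}{\lambda}\,\mathrm{arctanh}\!\left(\sqrt{1 - \frac{1}{C_0\beta J_{ii}^0}}\right),$$
and I would set $p_i$ equal to the negative root and $q_i$ equal to the positive root, giving $p_i < 0 < q_i$ with $\partial_i f_i(\mathbf{u})\big|_{u_i=p_i} = \partial_i f_i(\mathbf{u})\big|_{u_i=q_i} = 0$, as required.

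There is essentially no obstacle here: the statement is a one-variable computation, and the only subtlety is verifying that the argument of the arctanh is in $(0,1)$, which follows immediately from $C_0\beta J_{ii}^0>1$. The role of Lemma~\ref{lem:J0iiNonNegative00} is conceptual rather than technical: it ensures that the hypothesis $C_0\beta J_{ii}^0>1$ is even potentially satisfiable (one cannot have this inequality if $J_{ii}^0$ could be negative), and it selects the diagonal indices $i$ at which the turning-point structure governing the local bistability of (\ref{eq:IVPODE00}) actually arises. These two turning points are what the subsequent geometric analysis of the basin of attraction of $\beta_K\beta_1\xi^{(\mu+1)}$ will rely upon.
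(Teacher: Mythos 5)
Your proof is correct and follows essentially the same route as the paper: compute $\partial_i f_i(\mathbf{u}) = C_0\beta J_{ii}^0(1-\tanh^2(\lambda u_i)) - 1$, set it to zero, and observe that $\tanh^2(\lambda u_i) = (C_0\beta J_{ii}^0 - 1)/(C_0\beta J_{ii}^0)$ has exactly two symmetric real solutions precisely when $C_0\beta J_{ii}^0 > 1$. Your version is marginally more explicit in writing out $p_i$ and $q_i$ via $\mathrm{arctanh}$, which matches the expression the paper itself uses later when deriving the saddle-node bifurcation curve (\ref{eq:SNBifurCurve00}).
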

	
	\begin{proof}
		Since $\partial_i f_i(\mathbf{u}) = C_0\beta_K J_{ii}^0\lambda(1 - \tanh^2(\lambda u_i)) - 1$, setting $\partial_i f_i(\mathbf{u}) = 0$ and replacing $\beta_K\lambda$ by $\beta$ gives $C_0\beta J_{ii}^0(1 - \tanh^2(\lambda u_i)) = 1$. It follows that
		\begin{equation}
			\label{eq:TwoSolutionCondition00}
			\tanh^2(\lambda u_i) = \frac{C_0\beta J_{ii}^0 - 1}{C_0\beta J_{ii}^0},
		\end{equation}
		which implies that the equation $\partial_i f_i(\mathbf{u}) = 0$ has two distinct solutions $p_i$ and $q_i$ with $p_i < 0 < q_i$ only when $C_0\beta J_{ii}^0 > 1$. This completes the proof.
	\end{proof}
	
	\begin{remark}{\rm
		The last equality (\ref{eq:TwoSolutionCondition00}) implies that if $C_0\beta J_{ii}^0 > 1$, then there exist two distinct real values $p_i$ and $q_i$ with $p_i < 0 < q_i$ such that $\partial_i f_i(\mathbf{u})|_{u_i = p_i} = 0$ and  $\partial_i f_i(\mathbf{u})|_{u_i = q_i} = 0$; if $C_0\beta J_{ii}^0 = 1$, then $\partial_i f_i(\mathbf{u}) = 0$ only at $u_i = 0$; if $C_0\beta J_{ii}^0 < 1$, then $\partial_i f_i(\mathbf{u}) < 0$ for all $u_i\in\mathbb{R}$. Therefore, if $C_0\beta J_{ii}^0 < 1$ for every $i$, then the equation for equilibrium points $\mathbf{f}(\mathbf{u}) = \mathbf{0}$ will have only one solution. Direct substitution of $\mathbf{u} = \beta_K\beta_1\xi^{(\mu+1)}$ into the equation $\mathbf{f}(\mathbf{u}) = \mathbf{0}$ shows that $\beta_K\beta_1\xi^{(\mu+1)}$ is the solution. Since in this case $\partial_i f_i(\mathbf{u}) < 0$ for all $i$ and $\mathbf{u}\in\mathbb{R}^N$, it follows that the unique equilibrium $\mathbf{u^*} = \beta_K\beta_1\xi^{(\mu+1)}$ is globally asymptotically stable. Accordingly, we have the that, if $C_0\beta J_{ii}^0 < 1$ and $\tau$ is sufficiently large in comparison to the time span of the transients, then any admissible cycle is retrievable, and any cycle satisfying the transition condition imposed by the prescribed cycle can be retrieved in the network constructed from the prescribed cycle.
	}\label{rem:Remark5}
	\end{remark}
	
	\bigskip

%	\begin{theorem}
%		Suppose $C_0\beta J_{ii}^0 < 1$ and $\tau$ is sufficiently large in comparison to the time span of the transients. Then any admissible cycle is retrievable, and any cycle satisfying the transition condition imposed by the prescribed cycle can be retrieved in the network constructed from the prescribed cycle.
%	\end{theorem}
		
%	\bigskip

%%%%%%%%%%%%%%%%%%%%%%%%%%%%%%%%%%%%%%%%%%%%%%%%%%%%%%%%%%%%%%%%%%%%%%%%%%
\begin{figure}
	\begin{center}
		\includegraphics[height=2.2in]{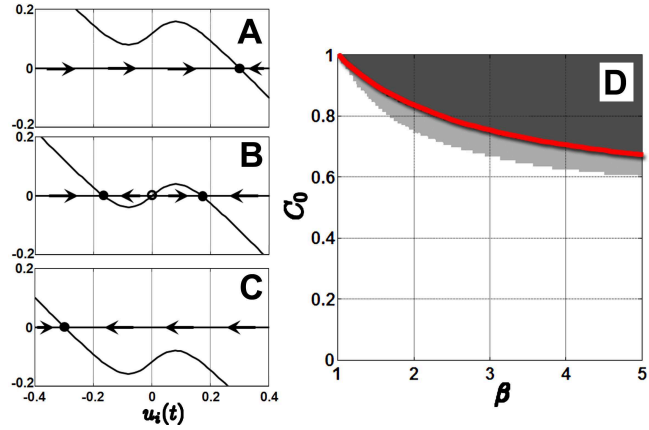}
	\end{center}
	\caption{\footnotesize{The graphs for $\hat{f}_i(u_i)$ (\textbf{A}), $\bar{f}_i(u_i)$ (\textbf{B}), and $\check{f}_i(u_i)$ (\textbf{C}). The parameters for the three curves are set as follows, $C_0 = 0.6$, $\beta = 3$, $\lambda=10$, $J_{ii}^0 = 1$ and $J_{ij}^0 = 0$ for all $i\not=j$ (this corresponds to $\Sigma$ being separable, minimal and consecutive \cite{Zhang13}). The red curve in \textbf{D} is the solution curve of the equation (\ref{eq:SNBifurCurve00}). The white, light gray and dark gray backgrounds in \textbf{D} correspond to the regions in the parameter plane where the system (\ref{eq:PotentialDelay00}) with ring topology with one inhibitory connection has $1$, $3$ or $1$, and $3^N$ equilibria respectively. On the boundary between light-gray region and dark-gray region, which coincides with the solution curve (red curve in \textbf{D}) of (\ref{eq:SNBifurCurve00}), both the multiple saddle-nodes on limit cycle bifurcations and saddle-node bifurcations occur. The former bifurcation breaks the limit cycle, and both bifurcations create all the rest equilibria.}}
	\label{fig:Figure6}
\end{figure}
%%%%%%%%%%%%%%%%%%%%%%%%%%%%%%%%%%%%%%%%%%%%%%%%%%%%%%%%%%%%%%%%%%%%%%%%%%

	For $C_0\beta J_{ii}^0 > 1$, since $\xi^{(\mu)}\in\{-1,1\}^N$ for all $\mu\in\mathbb{N}$, and the ``forcing term'' $C_1\beta_K\beta_1\xi_i^{(\mu + 1)}$ in (\ref{eq:RHSODE00}) vertically shifts the curve of the function (see Figure~\ref{fig:Figure6}\textbf{A}, \textbf{B} and \textbf{C})
	\begin{equation}
		\label{eq:fBar00}
		\bar{f}_i(u_i) = -u_i + C_0\beta_K J_{ii}^0 \tanh(\lambda u_i),
	\end{equation}
	we have that $\check{f}_i(u_i)$ $\leq$ $f_i(\mathbf{u})$ $\leq$ $\hat{f}_i(u_i)$ for every $i=1,2,\dots,N$, where $\check{f}_i(u_i)$ $=$ $-u_i$ $+$ $C_0\beta_K J_{ii}^0$ $\tanh(\lambda u_i)$ $+$ $k_i^-$, $\hat{f}_i(u_i)$ $=$ $-u_i$ $+$ $C_0\beta_K J_{ii}^0$ $\tanh(\lambda u_i)$ $+$ $k_i^+$ and $k_i^-$ $=$ $-C_0\beta_K\displaystyle{\sum\limits_{j=1,j\not=i}^N} |J_{ij}^0|$ $-$ $C_1\beta_K\beta_1$, $k_i^+$ $=$ $C_0\beta_K$ $\displaystyle{\sum\limits_{j=1,j\not=i}^N}$ $|J_{ij}^0|$ $+$ $C_1\beta_K\beta_1$.
	
	If $\hat{f}_i(p_i)<0$ and $\check{f}_i(q_i)>0$, then the function $f_i(\mathbf{u})$ must intersect with the horizontal axis at three distinct points. Accordingly, if $\hat{f}_i(p_i)<0$ and $\check{f}_i(q_i)>0$ for every $i$, then the derived system in (\ref{eq:IVPODE00}) has $3^N$ equilibria. Moreover, with further constraints on parameters, $2^N$ of these $3^N$ equilibria become asymptotically stable. A similar result has been proven in \cite{Shih06}, and using the same arguments as in \cite{Shih06}, we can prove the following two lemmas. For the sake of the presentation, we summarize the three parameter conditions as ($\mathbf{H_1}$): $C_0\beta J_{ii}^0 > 1$; ($\mathbf{H_2}$): $\check{f}_i(q_i) > 0$ and $\hat{f}_i(p_i) < 0$; ($\mathbf{H_3}$): $\displaystyle{C_0\beta\sum\limits_{j=1}^N |J_{ij}^0|(1-\tanh^2(\lambda\eta_j)) < 1}$, where $\eta_j$ is chosen such that $\tanh^2(\lambda\eta_j)$ $=$ $\min\{\tanh^2(\lambda u_j) | u_j = \check{c}_j,\hat{a}_j\}$ with $\check{c}_j$ and $\hat{a}_j$ defined exactly same as \cite{Shih06}, and $i = 1,2,\dots,N$.
	
	\begin{lemma}
		Under ($\mathbf{H_1}$) and ($\mathbf{H_2}$), the derived nonlinear system in (\ref{eq:IVPODE00}) has $3^N$ equilibria.
		\label{lem:TotalNumberOfEquilibria00}
	\end{lemma}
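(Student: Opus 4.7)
The plan is to adapt the geometric method of Shih et al.~\cite{Shih06} to the inhomogeneous system (\ref{eq:IVPODE00}). First I would rewrite each component of the equilibrium equation as $f_i(\mathbf{u})=\bar f_i(u_i)+h_i(\mathbf{u}_{-i})$, where $\bar f_i$ is the diagonal nonlinearity of (\ref{eq:fBar00}) and $h_i(\mathbf{u}_{-i}):=C_0\beta_K\sum_{j\neq i}J_{ij}^0\tanh(\lambda u_j)+C_1\beta_K\beta_1\xi_i^{(\mu+1)}$ absorbs the off-diagonal couplings and the constant forcing. Since $|\tanh|\le 1$, the perturbation $h_i$ takes values in $[k_i^-,k_i^+]$, giving the pointwise envelope $\check f_i(u_i)\le f_i(\mathbf{u})\le\hat f_i(u_i)$ that is already recorded before the statement of the lemma.

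Second, I would extract a componentwise trichotomy. Hypothesis $(\mathbf{H_1})$ combined with Lemma \ref{lem:TwoTurningPoints00} makes $\bar f_i$ strictly decreasing on $(-\infty,p_i]$, strictly increasing on $[p_i,q_i]$, and strictly decreasing on $[q_i,\infty)$, with $\bar f_i(u_i)\to\mp\infty$ as $u_i\to\pm\infty$. Hypothesis $(\mathbf{H_2})$, i.e.\ $\hat f_i(p_i)<0<\check f_i(q_i)$, then forces $\hat f_i$ to have three roots $\hat a_i<p_i<\hat b_i<q_i<\hat c_i$ and $\check f_i$ to have three roots $\check a_i<p_i<\check b_i<q_i<\check c_i$, which must interleave as $\check a_i<\hat a_i<p_i<\hat b_i<\check b_i<q_i<\check c_i<\hat c_i$ because $\check f_i$ and $\hat f_i$ differ from $\bar f_i$ only by constants and $\check f_i\le\hat f_i$. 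Since $f_i(\,\cdot\,,\mathbf{u}_{-i})$ has the same N-shape and limits as $\bar f_i$ and is sandwiched between the two envelopes, the intermediate value theorem gives it exactly three zeros, one each in the pairwise disjoint closed intervals $I_i^L:=[\check a_i,\hat a_i]$, $I_i^M:=[\hat b_i,\check b_i]$, $I_i^R:=[\check c_i,\hat c_i]$, and these intervals are independent of $\mathbf{u}_{-i}$.

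Third, the assembly. For each label $\sigma\in\{L,M,R\}^N$ let $B_\sigma:=\prod_{i=1}^N I_i^{\sigma_i}$. The trichotomy defines continuous selection maps $\phi_i^{\sigma_i}:\mathbb{R}^{N-1}\to I_i^{\sigma_i}$ (continuity follows from the implicit function theorem because $\partial_i f_i=\bar f_i'$ is nonzero on each $I_i^{\sigma_i}$, which is strictly separated from $p_i$ and $q_i$). These assemble into a continuous self-map $\Phi_\sigma:B_\sigma\to B_\sigma$ whose fixed points are exactly the equilibria of (\ref{eq:IVPODE00}) lying in $B_\sigma$. Brouwer's fixed-point theorem on the compact convex box $B_\sigma$ delivers at least one such equilibrium, and since the three $I_i^{\sigma}$'s are disjoint, every equilibrium of the derived system carries a well-defined label and the equilibrium set is partitioned across the $3^N$ boxes.

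The main obstacle is within-box uniqueness, as Brouwer alone does not preclude multiple fixed points of $\Phi_\sigma$. Following \cite{Shih06}, I would close the gap by showing $\Phi_\sigma$ is a contraction on $B_\sigma$ via implicit differentiation of $f_i(\phi_i^{\sigma_i}(\mathbf{u}_{-i}),\mathbf{u}_{-i})=0$, which gives $\partial_j\phi_i^{\sigma_i}=-(\partial_j f_i)/(\partial_i f_i)$, bounding the numerator by $|\partial_j f_i|\le C_0\beta|J_{ij}^0|$ through $|\mathrm{sech}^2|\le 1$, and bounding the denominator below by the strict monotonicity of $\bar f_i$ on each $I_i^{\sigma_i}$ guaranteed by the sandwich ordering above. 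The resulting diagonally-dominant Lipschitz estimate on $\Phi_\sigma$, which is qualitatively the discrete counterpart of $(\mathbf{H_3})$ used in the subsequent stability lemma, yields uniqueness per box and completes the count of exactly $3^N$ equilibria.
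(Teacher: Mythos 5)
Your argument follows exactly the route the paper intends: the paper itself gives only the envelope sketch ($\check f_i \le f_i \le \hat f_i$, with $(\mathbf{H_1})$--$(\mathbf{H_2})$ forcing three zeros per component) and then defers to the geometric method of Cheng--Lin--Shih \cite{Shih06}, which is precisely what you reconstruct via the interval trichotomy and the Brouwer fixed-point argument on the $3^N$ boxes. Your closing caveat is also well placed: within-box uniqueness (hence \emph{exactly} $3^N$) genuinely requires a diagonal-dominance condition of the type $(\mathbf{H_3})$, so under $(\mathbf{H_1})$--$(\mathbf{H_2})$ alone the lemma should be read, as in \cite{Shih06}, as asserting the existence of $3^N$ equilibria, one in each box.
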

	
	\begin{lemma}
		Under ($\mathbf{H_1}$), ($\mathbf{H_2}$) and ($\mathbf{H_3}$), the derived nonlinear system in (\ref{eq:IVPODE00}) has $2^N$ asymptotically stable equilibria.
	\end{lemma}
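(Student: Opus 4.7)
The plan is to follow the geometric-based method of Cheng--Shih et al.\ \cite{Shih06,Shih07} that was already invoked for the preceding Lemma \ref{lem:TotalNumberOfEquilibria00}. By that lemma we know that under ($\mathbf{H_1}$) and ($\mathbf{H_2}$) the derived system has exactly $3^N$ equilibria: they are indexed by strings $\mathbf{s}\in\{-,0,+\}^N$, where for each component $i$, $u_i^{*,\mathbf{s}}$ lies in one of the three intervals $(-\infty,\check{c}_i)$, $(\check{c}_i,\hat{a}_i)$, $(\hat{a}_i,\infty)$ determined by where the bounding curves $\hat{f}_i$ and $\check{f}_i$ meet the $u_i$-axis. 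The candidates for asymptotic stability are exactly the $2^N$ equilibria with $\mathbf{s}\in\{-,+\}^N$, i.e.\ those whose every component lies \emph{outside} the middle interval $[\check{c}_i,\hat{a}_i]$.

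Next, I would linearize $\mathbf{f}$ at such a candidate equilibrium. The Jacobian has the compact form
$$
D\mathbf{f}(\mathbf{u}^*)=-\mathbf{I}+C_0\beta\,\mathbf{J^0}\,\mathrm{diag}\bigl(1-\tanh^2(\lambda u_1^*),\dots,1-\tanh^2(\lambda u_N^*)\bigr),
$$
where as usual $\beta=\beta_K\lambda$. The goal is to show that every eigenvalue of this matrix has strictly negative real part. For this I would apply Gershgorin's circle theorem: every eigenvalue $\nu$ lies in a disk of the form
$$
\Bigl|\nu-\bigl(-1+C_0\beta J_{ii}^0(1-\tanh^2(\lambda u_i^*))\bigr)\Bigr|\;\leq\;C_0\beta\sum_{j\neq i}|J_{ij}^0|\bigl(1-\tanh^2(\lambda u_j^*)\bigr)
$$
for some $i$. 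A sufficient condition for $\Re(\nu)<0$ is therefore
$$
C_0\beta\sum_{j=1}^N|J_{ij}^0|\bigl(1-\tanh^2(\lambda u_j^*)\bigr)<1\qquad\text{for all }i.
$$

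To obtain this bound from ($\mathbf{H_3}$), I would use the fact that at a stable-candidate equilibrium each $u_j^*$ satisfies $u_j^*<\check{c}_j$ or $u_j^*>\hat{a}_j$, hence $|u_j^*|\ge|\eta_j|$, where $\eta_j$ is the one of $\check{c}_j,\hat{a}_j$ closer to zero in the sense defined just before the lemma. Since $1-\tanh^2(\lambda\cdot)$ is even and strictly decreasing on $[0,\infty)$, this yields $1-\tanh^2(\lambda u_j^*)\le 1-\tanh^2(\lambda\eta_j)$ for every $j$, so ($\mathbf{H_3}$) gives exactly the required strict inequality and the Gershgorin bound forces $\Re(\nu)<0$.

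Combining the above, each of the $2^N$ equilibria with $\mathbf{s}\in\{-,+\}^N$ is asymptotically stable. I expect the main obstacle to be the careful bookkeeping that matches the intervals $(-\infty,\check{c}_i),(\hat{a}_i,\infty)$ with the monotonicity of $1-\tanh^2(\lambda\cdot)$, so that the pointwise bound $1-\tanh^2(\lambda u_j^*)\le 1-\tanh^2(\lambda\eta_j)$ really holds at \emph{every} stable candidate. This is exactly the step that invokes Lemma \ref{lem:J0iiNonNegative00} implicitly (to guarantee $J_{ii}^0\ge 0$ so that the Gershgorin disks are well centered) and where one must import the geometric constructions of $\check{c}_j,\hat{a}_j$ from \cite{Shih06} verbatim; once this is set up, the stability conclusion is immediate from the Gershgorin estimate and the other $3^N-2^N$ equilibria are excluded since at least one component lies in $(\check{c}_i,\hat{a}_i)$, producing a positive-real-part eigenvalue by a symmetric argument.
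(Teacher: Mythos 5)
Your proposal is correct and is essentially the paper's own argument: the paper gives no proof here, stating only that the result follows ``using the same arguments as in \cite{Shih06}'', and your Gershgorin/diagonal-dominance estimate on $D\mathbf{f}(\mathbf{u}^*)=-\mathbf{I}+C_0\beta\,\mathbf{J^0}\,\mathrm{diag}(1-\tanh^2(\lambda u_j^*))$, combined with $J_{ii}^0\ge 0$ (Lemma~\ref{lem:J0iiNonNegative00}) and the monotonicity bound $1-\tanh^2(\lambda u_j^*)\le 1-\tanh^2(\lambda\eta_j)$ at the $2^N$ outer equilibria, is exactly how ($\mathbf{H_3}$) is meant to be used there. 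The only blemish is notational (the middle interval should be written with $\hat{a}_i<\check{c}_i$, i.e.\ roughly $(\hat{a}_i,\check{c}_i)$ rather than $(\check{c}_i,\hat{a}_i)$), which does not affect the argument.
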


	Clearly, $\hat{f}_i(u_i)$ and $\check{f}_i(u_i)$ are respectively the upper and lower bounds for the function $f_i(\mathbf{u})$. Depending on the values of $C_0$ and $\beta$, the curve $f_i(\mathbf{u})$ may intersect with the $u_i(t)$ axis in one, two or three points. For $C_0\beta J_{ii}^0 > 0$, let $u_i = p_i$ and take $\xi_i^{(\mu+1)} = 1$, then setting $f_i(\mathbf{u}) = 0$ and substituting $\displaystyle{p_i = -\frac{1}{\lambda}\mathrm{arctanh}\left(\sqrt{\frac{C_0\beta - 1}{C_0\beta}}\right)}$ yield the following transcendental equation for the curve of the saddle-node bifurcations of the system (\ref{eq:IVPODE00}) occuring at $u_i = p_i$,
	\begin{equation}
		\label{eq:SNBifurCurve00}
		\mathrm{arctanh}\left(\sqrt{\frac{C_0\beta - 1}{C_0\beta}}\right) - \sqrt{C_0\beta(C_0\beta - 1)} + C_1\mathrm{arctanh}(\beta_1) = 0,
	\end{equation}
	where $\beta = \mathrm{arctanh}(\beta_1)/\beta_1$, and $\beta_1\in(0,1)$. For the saddle-node bifurcations of the derived system in (\ref{eq:IVPODE00}) occuring at $u_i = q_i$, the equation for the bifurcation curve can be obtained similarly by taking $u_i$ $=$ $q_i$ $=$ $-p_i$ and $\xi_i^{(\mu+1)} = -1$. The simple direct calculation shows that the equation is exactly same as (\ref{eq:SNBifurCurve00}). Therefore, we use it for finding the parameter values of both saddle-node bifurcations.
	
	We numerically solve the above equation (\ref{eq:SNBifurCurve00}) for $C_0$ with values of $\beta$ between 1 and 5. The red curve in Figure~\ref{fig:Figure6}\textbf{D} illustrates the solutions $(\beta,C_0)$ for $\beta\in(1,5]$. It is very interesting to notice that numerically this curve coincides with the curves of the multiple saddle-nodes on limit cycle bifurcations in the networks (\ref{eq:PotentialDelay00}) with ring topology with one inhibitory connection and without transmission time delay. In Figure~\ref{fig:Figure6}\textbf{D}, we use the background with different grayscales to indicate the regions in the $\beta$-$C_0$ parameter plane in which the corresponding networks (\ref{eq:PotentialDelay00}) with ring topology with one inhibitory connection have $1$ (white background), $3$ or $1$ (light gray backgound), and $3^N$ (dark gray backgound) equilibria. As we will see in the next section, a \emph{multiple saddle-nodes on limit cycle bifurcation} \cite{Hoppensteadt97} and several \emph{saddle-note bifurcations} occur on the boundary between the light-gray and dark-gray regions. The multiple saddle-nodes on the limit cycle bifurcation breaks the limit cycle, and both bifurcations create all of the rest equilibria in the networks without transmission delay. We will discuss this in more details in Section 3. In the next example we show that it is the saddle-node bifurcation in the system (\ref{eq:IVPODE00}) that breaks the successful retrievals of the prescribed cycles, and we also compare this bifurcation with the multiple saddle-nodes on limit cycle bifurcation in the corresponding network without transmission delay.

%%%%%%%%%%%%%%%%%%%%%%%%%%%%%%%%%%%%%%%%%%%%%%%%%%%%%%%%%%%%%%%%%%%%%%%%%%
\begin{figure}
	\begin{center}
		\includegraphics[height=3in]{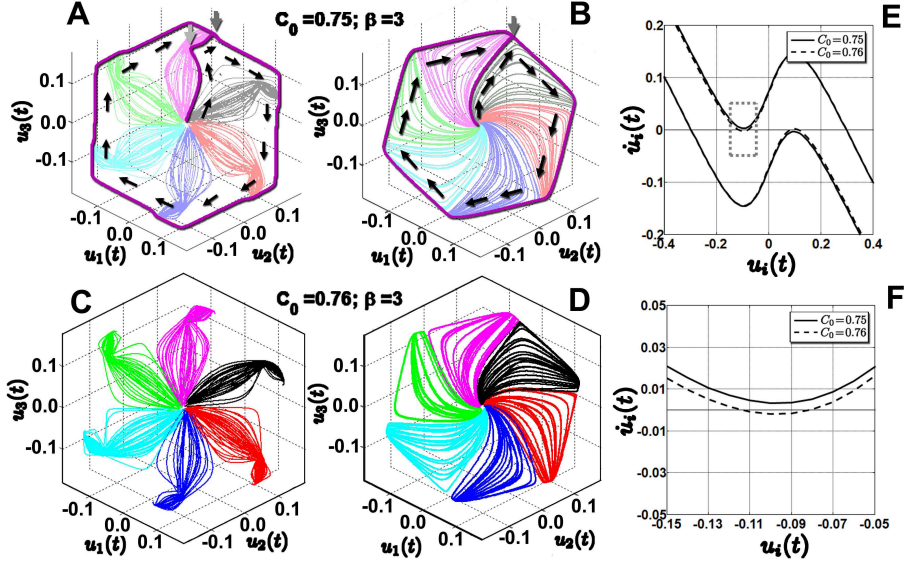}
	\end{center}
	\caption{\footnotesize{Breaking the cycles in the networks with (\textbf{A} and \textbf{C}) and without delay (\textbf{B} and \textbf{D}). The curves in \textbf{E} and \textbf{F} illustrate the occurrence of the saddle-node bifurcation in the derived system in (\ref{eq:IVPODE00}).}}
	\label{fig:Figure7}
\end{figure}
%%%%%%%%%%%%%%%%%%%%%%%%%%%%%%%%%%%%%%%%%%%%%%%%%%%%%%%%%%%%%%%%%%%%%%%%%%

\begin{example}{\rm 
In order to compare the saddle-node bifurcation of the derived system in (\ref{eq:IVPODE00}) with the multiple saddle-nodes on limit cycle bifurcation in the corresponding network without transmission time delay, we consider the networks constructed using the pseudoinverse learning rule from the following admissible cycle,
\begin{equation}
	\label{eq:PrescribedCycle01}
	\Sigma = \left(\begin{array}{cccccc}
		+ & + & + & - & - & - \\
		+ & + & - & - & - & + \\
		+ & - & - & - & + & +
	\end{array}\right).
\end{equation}
The network is constructed as follows
\begin{equation}
	\label{eq:N3P6Delay00}
	\begin{cases}
		\begin{array}{ccl}
			\dot{u}_1(t) = -u_1(t) + C_0\beta_K\tanh(\lambda u_1(t)) + C_1\beta_K\tanh(\lambda u_2(t-\tau)) \\
			\dot{u}_2(t) = -u_2(t) + C_0\beta_K\tanh(\lambda u_2(t)) + C_1\beta_K\tanh(\lambda u_3(t-\tau)) \\
			\dot{u}_3(t) = -u_3(t) + C_0\beta_K\tanh(\lambda u_3(t)) - C_1\beta_K\tanh(\lambda u_1(t-\tau))
		\end{array}
	\end{cases}.
\end{equation}
It is easy to verify that only two cycles are stored in this network, one is the prescribed cycle (\ref{eq:PrescribedCycle01}), the other is the cycle of the two patterns, $(-,+,-)^T$ and $(+,-,+)^T$. In Section 3, we will show in the numerical continuation computations that in this network the prescribed cycle $\Sigma$ is stored as an attracting limit cycle. In Figure~\ref{fig:Figure7}\textbf{A}, \textbf{B}, \textbf{C} and \textbf{D}, we respectively illustrate 180 phase trajectories starting from the randomly chosen initial data which are very close to the origin in simulations in the networks with (\textbf{A}, \textbf{C}) and without (\textbf{B}, \textbf{D}) transmission delay. The trajectories converging to the same first binary pattern are plotted in the same color. The light-gray arrow in Figure~\ref{fig:Figure7}\textbf{A} indicates the point at which $t=\tau$. At this point the delay term $C_1\beta_K\mathbf{J}\tanh(\lambda\mathbf{u}(t-\tau))$ in the system (\ref{eq:PotentialDelay00}) ceases being fixed, and the trajectory starts to evolve towards the point corresponding to the binary pattern $(-,+,+)^T$ and labeled with a dark-gray arrow. Here, we avoid calling the point equilibrium, because although after the saddle-node bifurcation in the derived system (\ref{eq:IVPODE00}) an attracting node does exist around the point, it does not exist before the bifurcation. However, in the region around this point, the system behaves like that in the neighborhood of a saddle. The trajectory approaches the point, and then moves away from it. In contrast, in the network without delay, there is no such a period as in the network with delay before $t=\tau$ (the arch between the starting point and the point labelled with the light-gray arrow). The phase trajectories of the networks without delay directly approach the point at which a saddle-node on limit cycle bifurcation occurs when the parameter values move across the bifurcation curve. Therefore, in Figure~\ref{fig:Figure7}\textbf{B} along the purple representative phase trajectory, there is no point corresponding to that labelled with a light-gray arrow in Figure~\ref{fig:Figure7}\textbf{A}. This is the most significant difference between the corresponding phase trajectories in the two networks. Using the Matlab packages for numerical continuation computations and bifurcation analysis, DDE-BIFTOOL and Matcont, we verified that each of the two purple representative phase trajectories in Figure~\ref{fig:Figure7}\textbf{A} and \textbf{B} converges to an attracting limit cycle (see Section 3 for details). In Figure~\ref{fig:Figure7}\textbf{C} and \textbf{D} we illustrate the phase trajectories (180 trajectories in each panel) in the same two networks with the parameter $C_0$ increased from $0.75$ (\textbf{A} and \textbf{B}) to $0.76$. Clearly, both networks stop retrieving the prescribed cycle. In Section 3, we will see that a multiple saddle-nodes on limit cycle bifurcation \cite{Hoppensteadt97} occurs during the increase of $C_0$ in the network both with and without delay, which breaks the limit cycle corresponding to the prescribed cycle into 6 pairs saddles and nodes. In Figure~\ref{fig:Figure7}\textbf{E}, we illustrate the curves of the function (\ref{eq:RHSODE00}) with $\xi_i^{(\mu+1)}$ $=$ $1$ (the upper solid and dashed curves) and $-1$ (the lower solid and dashed curves) respectively. When $C_0$ increases from $0.75$ to $0.76$, the graphs of the functions (\ref{eq:RHSODE00}) move from the solid curves to the dashed curves, while the points $f_i(p_i)$ and $f_i(q_i)$ move across the horizontal axis, indicating that a saddle-node bifurcation occurs in the derived system (\ref{eq:IVPODE00}). In order to visualize the portion of the curves close to the horizontal axis more clearly, in Figure~\ref{fig:Figure7}\textbf{F} we illustrate the portion of the solid and dashed curves in the region enclosed by a dashed gray box in Figure~\ref{fig:Figure7}\textbf{E}. 
	}
	\label{ex:Example2}
\end{example}

%%%%%%%%%%%%%%%%%%%%%%%%%%%%%%%%%%%%%%%%%%%%%%%%%%%%%%%%%%%%%%%%%%%%%%%%%%%%%%%%%%%%%%%%%%%%%%%%%%%%%%%%%%%%%%
\section{Bifurcations in Networks Constructed from Admissible Cycles}
%%%%%%%%%%%%%%%%%%%%%%%%%%%%%%%%%%%%%%%%%%%%%%%%%%%%%%%%%%%%%%%%%%%%%%%%%%%%%%%%%%%%%%%%%%%%%%%%%%%%%%%%%%%%%%
In Section 2, we have proved that with appropriately chosen parameter values, every admissible cycle is retrievable. In this section, we show that the local bifurcation structures at the trivial equilibrium of the networks can be determined by the structural features of the cycles prescribed in the networks.

%=============================================================================================================
\subsection{Linear Stability and Scenarios for Possible Local Bifurcations of the Trivial Equilibrium Solution}
%=============================================================================================================
Consider the nonlinear system of delay differential equations (\ref{eq:PotentialDelay00}). For the convenience of analysis, we rescale the time variable $t$ by applying the change of coordinates $t=\tilde{t}\tau$ and writing $\mathbf{w}(\tilde{t}) = \mathbf{u}(\tilde{t}\tau)$ to get
$$
	\mathbf{\dot{w}}(\tilde{t}) = \tau\Big{(}-\mathbf{w}(\tilde{t}) + C_0\beta_K\mathbf{J^0}\tanh(\lambda\mathbf{w}(\tilde{t})) + C_1\beta_K\mathbf{J}\tanh(\lambda\mathbf{w}(\tilde{t}-1))\Big{)}.
$$
To simplify the notation when it does not cause any confusion, we replace $\tilde{t}$ by $t$. Thus, we get the rescaled system as follows
\begin{equation}
	\label{eq:PotentialDelayRescaled00}
	\mathbf{\dot{w}}(t) = \tau\Big{(}-\mathbf{w}(t) + C_0\beta_K\mathbf{J^0}\tanh(\lambda\mathbf{w}(t)) + C_1\beta_K\mathbf{J}\tanh(\lambda\mathbf{w}(t-1))\Big{)}.
\end{equation}

It is easy to verify that $\mathbf{w^*}=\mathbf{0}$ is an equilibrium solution of the above system. By expanding the right-hand side of the system into Taylor series in the neighborhood of the equilibrium solution $\mathbf{w^*} = \mathbf{0}$, and neglecting the nonlinear terms, the linearization of the rescaled system (\ref{eq:PotentialDelayRescaled00}) is derived as follows
\begin{equation}
	\label{eq:PotentialDelayLinearized00}
	\dot{\delta\mathbf{w}}(t) = \mathbf{A}_1\delta\mathbf{w}(t) + \mathbf{A}_2\delta\mathbf{w}(t-1).
\end{equation}
where $\mathbf{A}_1 = \tau(C_0\beta\mathbf{J^0} - \mathbf{I})$ and $\mathbf{A}_2 = \tau C_1\beta\mathbf{J}$. Suppose (\ref{eq:PotentialDelayLinearized00}) has a solution of the form $\delta\mathbf{w}(t) = e^{\sigma t}\phi$ with $\sigma\in\mathbb{C}$ and $\phi\in\mathbb{R}^N$, and accordingly $\delta\mathbf{w}(t-1) = e^{\sigma (t-1)}\phi$. Substituting these two functions back into (\ref{eq:PotentialDelayLinearized00}) gives
\begin{equation}
	\label{eq:PreCharacteristicEquation00}
	\sigma e^{\sigma t}\phi = \tau(C_0\beta\mathbf{J^0} - \mathbf{I})e^{\sigma t}\phi + \tau C_1\beta\mathbf{J} e^{\sigma(t-1)}\phi.
\end{equation}
Let $\Delta(\sigma)$ denote the \emph{characteristic matrix} of the linearized system (\ref{eq:PotentialDelayLinearized00}), then
$$
	\Delta(\sigma) = (\sigma + \tau)\mathbf{I} - \tau C_0\beta\mathbf{J^0} - \tau e^{-\sigma} C_1\beta\mathbf{J}.
$$
Thus, (\ref{eq:PreCharacteristicEquation00}) can be rewritten as $\Delta(\sigma)\phi = \mathbf{0}$, and it has a nontrivial solution if and only if 
\begin{equation}
	\label{eq:CharacteristicEquation00}
	\det(\Delta(\sigma)) = 0,
\end{equation}
which is called the \emph{characteristic equation}. The solution set of the characteristic equation (\ref{eq:CharacteristicEquation00}) forms the \emph{spectrum} of the \emph{infinitesimal generator} $\mathcal{A}$ of the strongly continuous semigroup $\{\mathcal{T}(t)|t\geq 0\}$ of solution maps $\mathcal{T}(t):\mathcal{C}\rightarrow\mathcal{C}$, where for every $t\geq 0$, the corresponding solution map $\mathcal{T}(t)$ is defined by the relation $\mathbf{w}_t(\theta) = \mathcal{T}(t)\varphi(\theta)$ with the initial data $\varphi(\theta)\in\mathcal{C}$ \cite{Diekmann95,Hale93}.

As the connectivity matrices $\mathbf{J^0}$ and $\mathbf{J}$ are constructed from the prescribed cycle $\Sigma$ with the pseudoinverse learning rule (\ref{eq:J0}) and (\ref{eq:J1}), we have that $\mathbf{J^0}$ and $\mathbf{J}$ commute \cite{Zhang13}, and accordingly, $\mathbf{J^0}$ and $\mathbf{J}$ are simultaneously diagonalizable \cite{Horn85}. Suppose $\mathbf{Q}$ is a nonsingular matrix which simultaneously diagonalizes the connectivity matrices $\mathbf{J^0}$ and $\mathbf{J}$. Thus, if the characteristic matrix has the diagonalization $\Delta(\sigma) = \mathbf{Q}\mathbf{K}\mathbf{Q}^{-1}$, where $\mathbf{K} = \mathrm{diag}(\kappa_1,\kappa_2,\dots,\kappa_N)$, then
$$
	\det(\Delta(\sigma)) = \det(\mathbf{K}) = \prod\limits_{i=1}^N \kappa_i
$$
and
$$
	\mathbf{K} = (\sigma + \tau)\mathbf{I} - \tau C_0\beta\mathbf{Q}^{-1}\mathbf{J^0}\mathbf{Q} - \tau e^{-\sigma} C_1\beta\mathbf{Q}^{-1}\mathbf{J}\mathbf{Q}.
$$

Suppose the prescribed admissible cycle $\Sigma$ is separable, minimal and consecutive, then $\mathbf{J^0} = \mathbf{I}$ \cite{Zhang13}, thus
$$
	\mathbf{K} = (\sigma + \tau(1 - C_0\beta))\mathbf{I} - \tau e^{-\sigma} C_1\beta\mathbf{Q}^{-1}\mathbf{J}\mathbf{Q}.
$$
Since $\mathbf{J}^p = \mathbf{I}$, it follows that $\tilde{\mathbf{K}} = \mathrm{diag}(\tilde{\kappa}_1, \tilde{\kappa}_2, \dots, \tilde{\kappa}_N)$ with $\tilde{\kappa}_i = e^{2n_i\pi\mathbf{i}/p}$, where $\tilde{\mathbf{K}} = \mathbf{Q}^{-1}\mathbf{J}\mathbf{Q}$, $\mathbf{i} = \sqrt{-1}$, $0\leq n_1 \leq n_2 \leq \cdots \leq n_N \leq p-1$, and $N\leq p$. Accordingly, we get
\begin{equation}
	\label{eq:Eigenvalues00}
	\kappa_i = \sigma + \tau(1 - C_0\beta) - \tau C_1\beta e^{-\sigma + 2n_i\pi\mathbf{i}/p}.
\end{equation}
Thus, the characteristic equation (\ref{eq:CharacteristicEquation00}) becomes
\begin{equation}
	\label{eq:CharacteristicEquationProd00}
	\prod\limits_{i=1}^N (\sigma - \tau C_1\beta e^{2n_i\pi\mathbf{i}/p - \sigma} + \tau(1 - C_0\beta)) = 0.
\end{equation}
Specially, for networks without transmission delay, the above characteristic equation reduces to
\begin{equation}
	\label{eq:CharacteristicEquationProdNoDelay00}
	\prod\limits_{i=1}^N (\sigma - C_1\beta e^{2n_i\pi\mathbf{i}/p} + (1 - C_0\beta)) = 0.
\end{equation}

Suppose $\sigma = \alpha + \mathbf{i}\omega$, $\alpha$, $\omega\in\mathbb{R}$, then following from (\ref{eq:CharacteristicEquationProd00}), we have that
\begin{equation}
	\label{eq:CharRootRealImagParts01}
	\begin{cases}
		\begin{array}{ccl}
			\alpha + \tau(1 - C_0\beta) & = & \tau(1 - C_0)\beta e^{-\alpha}\cos(\tilde{\omega}) \\
			         \omega             & = & \tau(1 - C_0)\beta e^{-\alpha}\sin(\tilde{\omega})
		\end{array}
	\end{cases}
\end{equation}
where $\tilde{\omega} = 2n_i\pi/p - \omega$. It follows that for $\alpha = 0$, 
\begin{equation}
	\label{eq:CharRootRealImagParts00}
	\begin{cases}
		\begin{array}{ccl}
			\tau(1 - C_0\beta) & = & \tau(1 - C_0)\beta\cos(\tilde{\omega}) \\
			\omega             & = & \tau(1 - C_0)\beta\sin(\tilde{\omega})
		\end{array}
	\end{cases}
\end{equation}
Thus, adding the first equation squared to the second equation squared gives
\begin{equation}
	\label{eq:CharRootC0}
	C_0=\frac{\beta+1}{2\beta} - \frac{\omega^2}{2\tau^2(\beta - 1)\beta}.
\end{equation}
Solving the first equation for $\omega$ gives
\begin{equation}
	\label{eq:CharRootOmega}
	\omega = \frac{2n_i\pi}{p} - \mathrm{arccos}\frac{1 - C_0\beta}{(1 - C_0)\beta}.
\end{equation}
Substituting (\ref{eq:CharRootOmega}) into (\ref{eq:CharRootC0}) gives the implicit equation for the curves of the characteristic roots with zero real part,
\begin{equation}
	\label{eq:StabilityBoundariesDelay00}
	C_0 - \frac{\beta + 1}{2\beta} + \frac{1}{2\tau^2(\beta - 1)\beta}\left(\frac{2n_i\pi}{p} - \mathrm{arccos}\frac{1 - C_0\beta}{(1 - C_0)\beta}\right)^2 = 0.
\end{equation}
In the case of networks without transmission delay, the above implicit equation reduces to
\begin{equation}
	\label{eq:StabilityBoundariesNoDelay00}
	C_0 = \frac{1 - \beta\cos(2n_i\pi/p)}{(1 - \cos(2n_i\pi/p))\beta}.
\end{equation}

%%%%%%%%%%%%%%%%%%%%%%%%%%%%%%%%%%%%%%%%%%%%%%%%%%%%%%%%%%%%%%%%%%%%%%%%%%%%
\begin{figure}
	\begin{center}
		\includegraphics[height=2.8in]{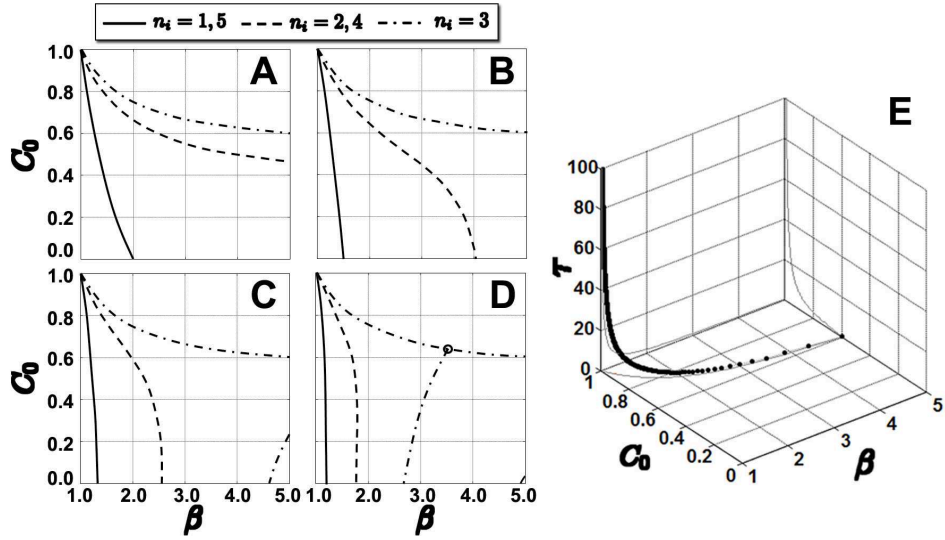}
	\end{center}
	\caption{\footnotesize{Curves of characteristic roots with zero real part (\textbf{A}-\textbf{D}) and the curve of the double-zero characteristic roots (\textbf{E}). In \textbf{A}, \textbf{B}, \textbf{C} and \textbf{D}, the solid curves correspond to $n_i=1$ or $5$; the dashed curves correspond to $n_i=2$ or $4$; and the dash-dot curves correspond to $n_i=3$. For $n_i=0$, $\Re(\sigma)>0$ for all $0\leq C_0\leq 1$ and $1<\beta$, therefore no curve corresponding to $n_i=0$ is shown for $\beta>1$ and $0\leq C_0\leq 1$. In both \textbf{C} and \textbf{D} the dash-dot curve consists of two parts. On the top part, the characteristic roots $\sigma$ move across the imaginary axis along the real axis, and on the right part, the characteristic roots $\sigma$ move across the imaginary axis off the real axis. The open circle in \textbf{D} is the intersection of the two parts of the dash-dot curve, which is a double zero and suggests a Bogdanov-Takens bifurcation \cite{Gerhard87,Campbell08,Fan12}. The dotted curve in \textbf{E} corresponds to the double zeros of the characteristic equation (\ref{eq:CharacteristicEquationProd00}). The parameters for the curves shown in this figure are set as follows, $p=6$, $\tau = 0$(\textbf{A}), $\tau = 0.2$(\textbf{B}), $\tau = 0.4$ (\textbf{C}), and $\tau = 0.8$(\textbf{D}).}}
	\label{fig:StabilityBoundary}
\end{figure}
%%%%%%%%%%%%%%%%%%%%%%%%%%%%%%%%%%%%%%%%%%%%%%%%%%%%%%%%%%%%%%%%%%%%%%%%%%%%

Since $\mathbf{P}$ is a cyclic permutation matrix (\ref{eq:P}), eigenvalues of $\mathbf{P}$ are  $p$-th roots $\rho^{k} = e^{2k\pi\mathbf{i}/p}$ of unity, and its $(k+1)$-th eigenvector has the general form $v^{(k)} = (1,\rho^k,\dots,\rho^{(p-1)k})^T$. In \cite{Zhang13}, we have seen that the indices $n_i$ in the characteristic equations (\ref{eq:CharacteristicEquationProd00}) and (\ref{eq:CharacteristicEquationProdNoDelay00}) are determined by the structural features of $\Sigma$. Specifically, if the cycle $\Sigma$ annihilates the $(k+1)$-th eigenvector, i.e. $\Sigma v^{(k)} = \mathbf{0}$, then $k$ will not appear in the set of the $N$ indices $\{n_i| i = 1, 2, \dots, N\}$. Thus, the structural features of a cycle $\Sigma$ determine the linear stability of the equilibrium points of the networks constructed from it by selecting the indices $n_i$ that appear in the corresponding characteristic equation (\ref{eq:CharacteristicEquationProd00}) or (\ref{eq:CharacteristicEquationProdNoDelay00}), and hence indirectly determine the local dynamics of the networks with and without transmission delay.

Therefore, in general, for a given $p$, depending on how characteristic roots move across the imaginary axis on the curves defined by (\ref{eq:StabilityBoundariesDelay00}) or (\ref{eq:StabilityBoundariesNoDelay00}), a scenario of all possible local bifurcations of the trivial equilibrium solution of a network constructed from a simple cycle of length $p$ can be obtained. Next, as an illustrative example, we show the scenario of all possible local bifurcations of the trivial equilibrium solution of the networks constructed from simple cycles of length $p=6$ in Figure~\ref{fig:StabilityBoundary}. 

The curves in panels \textbf{A}, \textbf{B}, \textbf{C} and \textbf{D} respectively illustrate the characteristic roots with zero real part for the transmission delay $\tau = 0$, $0.2$, $0.4$, and $0.8$. The solid and dashed curves correspond to purely imaginary characteristic roots, and $n_i=1$ or $5$ and $n_i = 2$ or $4$ respectively. If the index $n_i=1,5$ or $n_i=2,4$ is ``chosen'' by the prescribed cycle, then when parameters of the network move across the corresponding curve, solid or dashed, transversely, a Hopf bifurcation would occur. 

The dash-dot curve on the top of each of the four panels corresponds to zero characteristic roots, and $n_i=3$. Since no quadratic term appears in the Taylor expansion of the right-hand side of (\ref{eq:PotentialDelayRescaled00}) around the trivial solution $\mathbf{w^*} = \mathbf{0}$, it follows that if the prescribed cycle ``selected'' $n_i=3$, and the parameters of the network move across the dash-dot curve on the top transversely, a pitchfork bifurcation would occur.

It is necessary to mention that the curve of zero characteristic roots is independent of the transmission delay $\tau$, and an easy direct calculation shows that the explicit formula for this curve is
\begin{equation}
	\label{eq:PitchforkCurve00}
	C_0 = \frac{1 + \beta}{2\beta},
\end{equation}
and in this case $n_i = p/2$, where $p$ is not only just $6$, it could be any even natural number with $p/2$ being odd. It is also not difficult to see that for $n_i=p/2$, in addition to the dash-dot curve (\ref{eq:PitchforkCurve00}) on the top in every panel \textbf{A}, \textbf{B}, \textbf{C} and \textbf{D}, the equation (\ref{eq:StabilityBoundariesDelay00}) has other solutions. These ``extra'' solutions correspond to purely imaginary characteristic roots too. In \textbf{C} and \textbf{D}, these ``extra'' solutions are plotted in dash-dot curves on the right. Accordingly, if $n_i=3$ is chosen, then when parameters $(C_0,\beta)$ move across the curve of these ``extra'' solutions transversely, a Hopf bifurcation would occur. As the ``extra'' solutions change with the transmission delay $\tau$, the curve of these solutions moves together with other curves corresponding to purely imaginary characteristic roots, and intersects with the dash-dot curve on the top at one single point (open circle in Figure~\ref{fig:StabilityBoundary}\textbf{D}). At this point, the curve of the first (i.e. leftmost) ``extra'' solution terminates. Since the intersection point corresponds to a double zero of the characteristic equation (\ref{eq:CharacteristicEquationProd00}), it can be verified through direct computations that at this point a codimension-two Bogdanov-Takens bifurcation occurs \cite{Gerhard87,Campbell08,Fan12}. The points on the dotted curve in \textbf{E} correspond to these double zeros.

In general, for networks constructed from admissible cycles with $p$ even, Figure~\ref{fig:StabilityBoundary} provides an overview of the scenario of all possible local bifurcations of the trivial equilibrium solution. If $n_i=p/2$ is ``chosen'', then at the trivial equilibrium solution of the corresponding network, Hopf bifurcations, pitchfork bifurcation, and Bogdanov-Takens bifurcation would happen. If $n_i=p/2$ is not ``chosen'', then at the trivial equilibrium solution, only Hopf bifurcations would happen. If $n_i=0$ is ``selected'', then all the solutions bifurcating from the trivial equilibrium, including the trivial equilibrium itself, are unstable.

Directly substituting $\omega=0$ into (\ref{eq:CharRootC0}) and (\ref{eq:CharRootOmega}) gives $n_i=p/2$. Since for networks constructed from admissible cycles with $p$ odd, $p/2$ is not an integer, it follows that no index $n_i$ can be $p/2$, and accordingly, when the characteristic roots move across the imaginary axis, none of them passes through the origin. Thus, in such networks, no pitchfork bifurcation would happen, only Hopf bifurcations occur at the trivial equilibrium solution.

Since every network constructed from a separable cycle $\Sigma$ consists of isolated clusters, and each of such clusters corresponds to a simple cycle associated with a generator of the prescribed cycle $\Sigma$ \cite{Zhang13}, it follows that the local bifurcations of the trivial solution of such networks are determined by the structural features of its simple cycle components.

For networks constructed from inseparable cycles, local bifurcation structures of the trivial equilibrium solution are much more complicated.

In the next subsection, we demonstrate how the structural features of the prescribed cycles determine the local bifurcation structures in examples. We demonstrate that prescribed cycles are stored and retrieved in the corresponding networks as different mathematical objects. Anti-symmetric simple MC-cycles \cite{Zhang13} of size $N\times p$ with $N = p/2$ are stored and retrieved as the attracting limit cycles created from the Hopf bifurcation which occurs when the pair of the conjuate complex characteristic roots with the largest real part move across the imaginary axis transversely from the left. Simple MC-cycles of size $N\times p$ with $N=p$ are stored and retrieved as transient oscillations that are purely due to the effects of the transmission delay. More complicated cycles, including simple cycles and inseparable composite cycles, both prescribed and derived, are stored and retrieved as either attracting limit cycles or transient oscillations induced by the delay.

%=============================================================================================================
\subsection{Bifurcations in Networks Constructed from Admissible Cycles}
%=============================================================================================================
In Section 3.1, we have explained how the structural features of the admissible cycle prescribed in a network can be used to determine the local bifurcation structures of the network. In this subsection, we discuss the structure of the local bifurcations at the trivial equilibrium of the networks constructed from different type of admissible cycles.

%=============================================================================================================
\paragraph{Bifurcations in Networks Constructed from Anti-symmetric Simple MC-Cycles with $N = p/2$}
%=============================================================================================================
Before discussing local bifurcations of the trivial solution of the networks constructed from anti-symmetric simple MC-cycles with $N = p/2$, we recall two definitions from \cite{Zhang13}.

\begin{definition}
	\label{def:SimpleCycle00}
	A cycle $\Sigma$ is called \emph{simple}, if it is generated by one single binary row vector $\eta$, in other words, its rows are cyclic permutations of a binary row vector $\eta$. That is, if $\eta_i$ is the $i$-th row of the cycle $\Sigma$, then $\eta_i = \eta\mathbf{P}^k$ for some $k\in\mathbb{N}$.
\end{definition}

\begin{definition}
	\label{def:AntiSymmetricSimpleMC-Cycle00}
	A binary row vector $\eta = (\eta_1,\eta_2,\dots,\eta_p) \in \{-1,1\}^p$ is said to be \emph{anti-symmetric}, if it has the following two properties: (a) $p$ is even; (b) $\eta = (\zeta,-\zeta)$, where $\zeta = (\zeta_1,\zeta_2,\dots,\zeta_{p/2})\in \{-1,1\}^{p/2}$. A cycle $\Sigma$ of size $N\times p$ is called \emph{anti-symmetric simple MC}, if it has the following four properties: (a) $\Sigma$ is simple; (b) $\eta_i$ is anti-symmetric for every $i$, where $\eta_i$ designates the $i$-th row of $\Sigma$; (c) $\mathrm{rank}(\Sigma) = N$; (d) $\eta_{i+1} = \eta_{i}\mathbf{P}$ for all $1\leq i<N$, where $\mathbf{P}$ is the cyclic permutation matrix defined by (\ref{eq:P}). 
\end{definition}

\begin{remark}
	\label{rem:Remark6}{\rm
		For example, the admissible cycle (\ref{eq:PrescribedCycle01}) discussed in Example~\ref{ex:Example2} is an anti-symmetric simple MC-cycle of size $N\times p$ with $N=p/2$. Gencic et al. \cite{Gencic90} have considered storage and retrieval of such cycles. Both numerical simulations and analog electronic circuit experiments demonstrated successful storage and retrieval of such cycles in Hopfield-type neural networks without delay. In \cite{Zhang13}, we showed that networks constructed from such cycles are rings of unidirectionally coupled neurons (see Figure~\ref{fig:NetworkTopology}). In such ring networks of $N$ neurons, the connection from the first neuron to the $N$-th neuron is inhibitory, and except for this connection, all other connections are excitatory. It has been well known that for ring networks of unidirectionally coupled neurons, if the number of inhibitory couplings is odd, the ring networks can generate sustained oscillations, and such ring networks have been widely used in different areas ranging from digital circuits for variable-frequency oscillations \cite{Gutierrez99} to models of nervous systems for generating rhythmic movements \cite{Collins94,Dror99}.
		
		In this section, we show that in such ring networks, the prescribed anti-symmetric simple MC-cycles with $N=p/2$ are stored and retrieved as attracting limit cycles. For the sake of better visualization, we choose the cycle (\ref{eq:PrescribedCycle01}) for illustrations.
	}
\end{remark}

%%%%%%%%%%%%%%%%%%%%%%%%%%%%%%%%%%%%%%%%%%%%%%%%%%%%%%%%%%%%%%%%%%%%%%%%%%%%
\begin{figure}
	\begin{center}
		\includegraphics[height=1.5in]{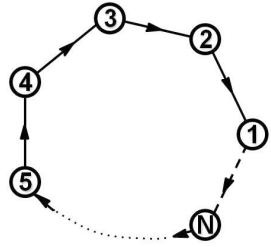}
	\end{center}
	\caption{\footnotesize{Topology of the networks constructed from anti-symmetric simple MC-cycles of size $N\times p$ with $p = 2N$. Such networks have topology of unidirectionally coupled neurons. All connections, except for the one from the neuron $1$ to the neuron $N$ which is inhibitory (dashed line), are excitatory (solid lines).}}
	\label{fig:NetworkTopology}
\end{figure}
%%%%%%%%%%%%%%%%%%%%%%%%%%%%%%%%%%%%%%%%%%%%%%%%%%%%%%%%%%%%%%%%%%%%%%%%%%%%

%%%%%%%%%%%%%%%%%%%%%%%%%%%%%%%%%%%%%%%%%%%%%%%%%%%%%%%%%%%%%%%%%%%%%%%%%%%%
\begin{figure}
	\begin{center}
		\includegraphics[height=4in]{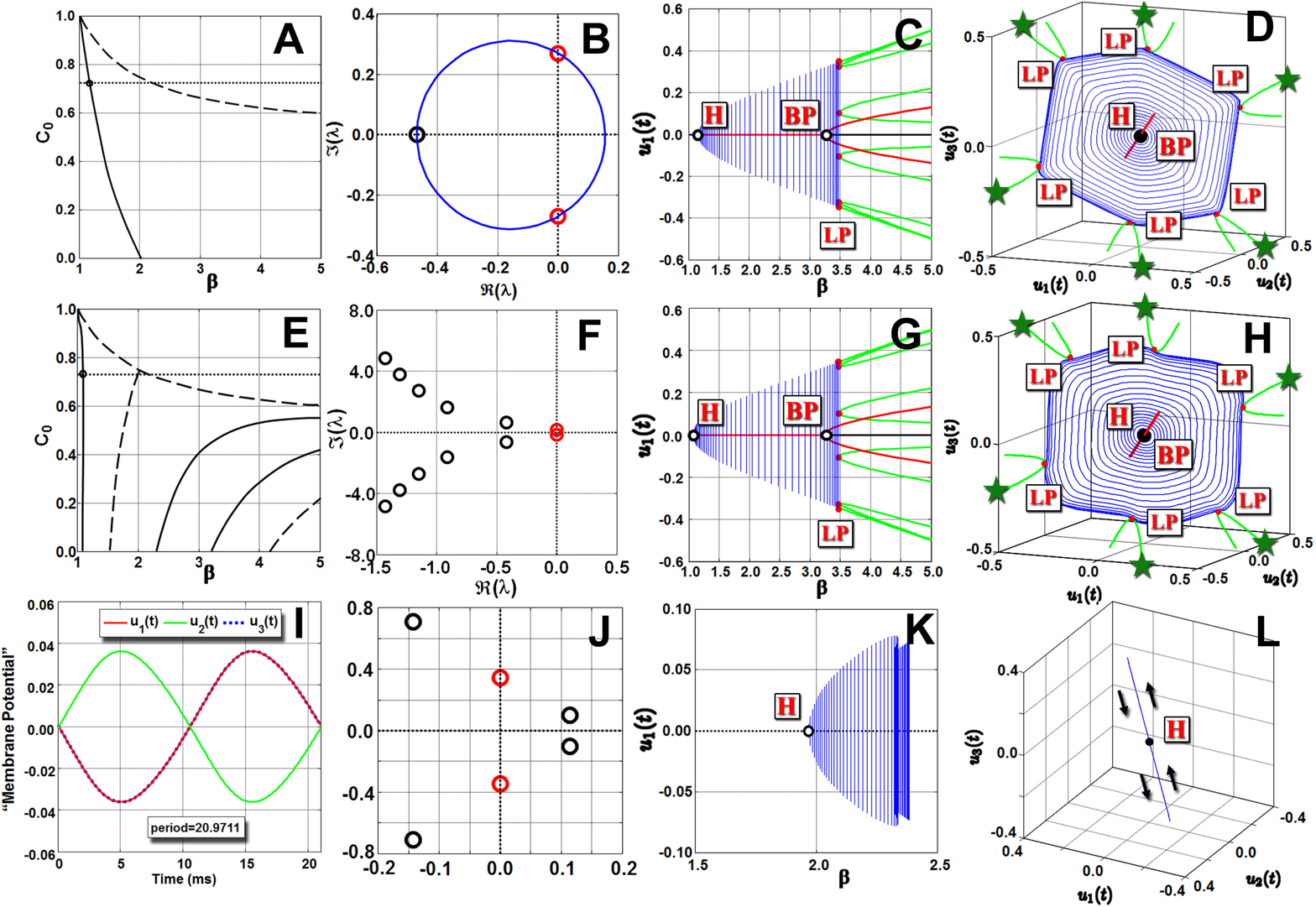}
	\end{center}
	\caption{\footnotesize{Local bifurcations of the trivial solution of the network constructed from the anti-symmetric simple MC-cycle (\ref{eq:PrescribedCycle01}). Panels from \textbf{A} to \textbf{D} are for the network without delay ($\tau=0$ms), and panels from \textbf{E} to \textbf{F} are for the network with delay ($\tau=2.0$ms). Panels \textbf{A} and \textbf{E} are bifurcation curves of the two networks respectively. In both panels, the solid curves are the bifurcation curves corresponding to $n_1=1$ and $n_3=5$, and the dashed curves are those of the bifurcations corresponding to $n_2=3$. The horizontal dotted line in each of \textbf{A} and \textbf{E} indicates the path ($C_0=0.73$) in parameter space along which the numerical continuation computations were carried out. The intersection between the dotted line and the solid curve indicates where the Hopf bifurcation occurs, and in both networks, this bifurcation creates the attracting limit cycle corresponding to the prescribed cycle (\ref{eq:PrescribedCycle01}). In both panels, the dashed curve on the top is the curve corresponding to the pitchfork bifurcation, and all other (dashed and solid) curves are Hopf bifurcation curves. Panels \textbf{B}, \textbf{F} and \textbf{J} illustrate distributions of characteristic roots when the conjugate pair of the characteristic roots with the largest (\textbf{B},\textbf{F}) and second (\textbf{J}) real part move across the imaginary axis from the left transversely, which indicates a Hopf bifurcation. Panels \textbf{C} and \textbf{D} illustrate the results of the numerical continuations of trivial and non-trivial equilibrium solutions and the periodic solution bifurcating from the trivial solution. In panels \textbf{C}, \textbf{D}, \textbf{G}, \textbf{H}, \textbf{K} and \textbf{L}, we adopt the notations of MatCont \cite{Kuznetsov03}, and use \textbf{H} to label \textbf{H}opf bifurcations, \textbf{BP} to label \textbf{B}ranch (pitchfork bifurcation) \textbf{P}oints, and \textbf{LP} to label \textbf{L}imit \textbf{P}oint (fold or saddle-node) bifurcations, respectively. The dark green pentagrams in panels \textbf{D} and \textbf{H} label the branches of nodes continued from the multiple saddle-nodes on limit cycle bifurcation. Panel \textbf{I} illustrates the unstable periodic solution ($\beta=2.0345$) bifurcating from the trivial solution via the first ``extra'' Hopf bifurcation corresponding to $n_2=3$, and \textbf{K} and \textbf{L} illustrate numerical continuations of this periodic solution. The continuation computations shown in \textbf{C} and \textbf{D} were implemented in MatCont 3.1. The continuation computations shown in \textbf{G} \textbf{H}, \textbf{K} and \textbf{L} were implemented in DDE-BIFTOOL 2.03.}}
	\label{fig:BifurRing}
\end{figure}
%%%%%%%%%%%%%%%%%%%%%%%%%%%%%%%%%%%%%%%%%%%%%%%%%%%%%%%%%%%%%%%%%%%%%%%%%%%%

%%%%%%%%%%%%%%%%%%%%%%%%%%%%%%%%%%%%%%%%%%%%%%%%%%%%%%%%%%%%%%%%%%%%%%%%%%%%
\begin{figure}
	\begin{center}
		\includegraphics[height=3.5in]{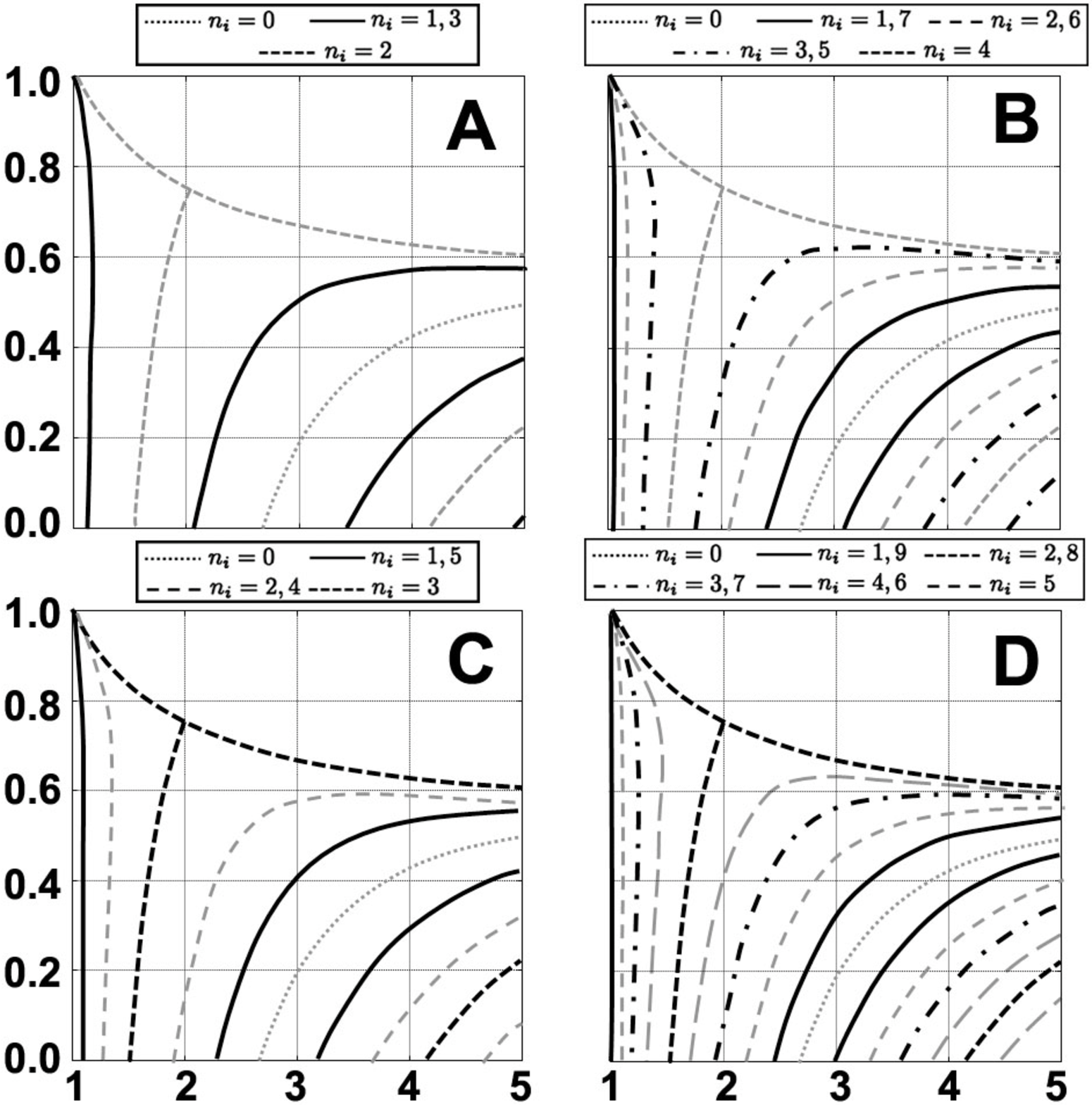}
	\end{center}
	\caption{\footnotesize{Scenario of all possible bifurcations of the networks constructed from a cycle of period $p=4$ (\textbf{A}), $8$ (\textbf{B}), $6$ (\textbf{C}), and $10$ (\textbf{D}) respectively. The curves in black and gray in each panel are the curves of characteristic roots of the network constructed from the four anti-symmetric simple MC-cycles with $N = p/2$, $\Sigma_1$ (\textbf{A}), $\Sigma_2$ (\textbf{B}), $\Sigma_3$ (\textbf{C}), and $\Sigma_4$ (\textbf{D}) (see text for the generators of these four cycles). The black curves in the four panels are those respectively chosen by the four prescribed cycles. For $\Sigma_1$ (\textbf{A}) and $\Sigma_2$ (\textbf{B}), the numbers of neurons in the two networks are $N = 2$ and $N = 4$. Clearly, in this case, $p/2$, which are $2$ and $4$ respectively, are not chosen by the prescribed cycles. Therefore, the pitchfork bifurcation (short dashed ``horizontal'' curve on the top of each panel) could not occur in these two networks, and accordingly, the Bogdanov-Takens bifurcation (intersection between the short dashed ``horizontal'' curve on the top and the short dashed ``vertical'' curve on the left of each panel) could not occur in these two networks either. For $\Sigma_3$ (\textbf{C}) and $\Sigma_4$ (\textbf{D}), the numbers of neurons in the two networks are $N = 3$ and $N = 5$. Thus, $p/2$, which are $3$ and $5$ respectively, are both chosen by the prescribed cycles. Therefore, in these two networks, both Hopf bifurcations and pitchfork bifurcation occur at the trivial equilibrium solution. Since the networks shown in this figure are with transmission delay ($\tau = 2.0$ms), it follows that the Bogdanov-Takens bifurcation occur at the trivial equilibrium solution in these two networks too. All the curves are obtained by numerically continuing Hopf bifurcations and steady-state bifurcations with DDE-BIFTOOL 2.03. Parameters for the computations are set as: $\lambda = 10$, $\tau = 2.0$.}}
	\label{fig:RingBifSummary00}
\end{figure}
%%%%%%%%%%%%%%%%%%%%%%%%%%%%%%%%%%%%%%%%%%%%%%%%%%%%%%%%%%%%%%%%%%%%%%%%%%%%

\begin{example}
	\label{ex:Example3}{\rm
		Due to its anti-symmetric structures, the prescribe cycle $\Sigma$ annihilates the first, third, and fifth eigenvectors $v^{(0)}$ $=$ $(1$, $1$, $1$, $1$, $1$, $1)^T$, $v^{(2)}$ $=$ $(1$, $\rho^{2}$, $\rho^{4}$, $1$, $\rho^{2}$, $\rho^{4})^T$, and $v^{(4)}$ $=$ $(1$, $\rho^{4}$, $\rho^{2}$, $1$, $\rho^{4}$, $\rho^{2})^T$ of the cyclic permutation matrix $\mathbf{P}$, where $\rho = e^{\pi\mathbf{i}/3}$. Accordingly, it ``selects'' the indices $n_1 = 1$, $n_2 = 3$, and $n_3 = 5$ for the characteristic equation (\ref{eq:CharacteristicEquationProd00}) or (\ref{eq:CharacteristicEquationProdNoDelay00}) of the network constructed from it with and without delay respectively. Figure~\ref{fig:BifurRing}\textbf{A} shows the bifurcation curves of the network without delay. The solid curve is the Hopf bifurcation corresponding to $n_1=1$ and $n_3=5$, and the dashed curve is the pitchfork bifurcation corresponding to $n_2=3$. Since $0$, $2$, and $4$ are not the $n_i$ indices in the characteristic equation (\ref{eq:CharacteristicEquationProdNoDelay00}), it follows that the limit cycle created from the Hopf bifurcation corresponding to $n_1=1$ and $n_3=5$ may be stable, and the Hopf bifurcation corresponding to $n_i=2$ and $4$ could not occur in this network. Using the MatLab package MatCont 3.1, we numerically continue both the equilibrium solutions and periodic solution created from the trivial solution via the Hopf bifurcation. Both the analytic computations and numerical simulations confirm that the limit cycle created from the trivial solution via the Hopf bifurcation is stable. Numerical simulations (see Figure 1 in \cite{Zhang13}) show that the limit cycle satisfies the transition conditions imposed by the prescribed cycle (\ref{eq:PrescribedCycle01}). Moreover, numerical continuations (Figure~\ref{fig:BifurRing}\textbf{C} and \textbf{D}) also indicate that the prescribed cycle is retrieved as the attracting limit cycle (see the six nodes labelled by dark green pentagrams in panel \textbf{D}).
		
		Figure~\ref{fig:BifurRing}\textbf{E}-\textbf{H} illustrate the results of the same analysis implemented in the network with delay. From the distribution of characteristic roots (\textbf{F}) and bifurcation curves (\textbf{E}), it is not difficult to see that delay changes structure of the local bifurcations of the trivial solution significantly. Especially, due to the interaction between the Hopf bifurcation (\textbf{K},\textbf{L}) corresponding to $n_2=3$ and the complex conjugate characteristic roots with the second largest real part (\textbf{J}) and the pitchfork bifurcation, a codimension two Bogdanov-Takens bifurcation occurs in the network with delay, and this bifurcation can never happen in the network constructed from the cycle (\ref{eq:PrescribedCycle01}) without delay. In this network two cycles satisfy the transition conditions imposed by the prescribed cycle (\ref{eq:PrescribedCycle01}), one is the prescribed cycle, the other is the following derived cycle of periodic 2,
		$$
			\Sigma = \left(\begin{array}{cc}
				+ & - \\
				- & + \\
				+ & - 
			\end{array}\right).
		$$
		One can show that this derived cycle is retrieved as a unstable periodic solution, which does not exist in the network without delay. Figure~\ref{fig:BifurRing}\textbf{I} illustrates the profile of this unstable periodic solution at $C_0=0.73$ and $\beta=2.0345$. Figure~\ref{fig:BifurRing}\textbf{K} and \textbf{L} illustrate the numerical continuations of this periodic solution.
		
		Despite of the dramatic differences between the networks with and without delay, in terms of the ``principal'' Hopf bifurcation described above, which corresponds to $n_1=1$ and $5$, and the conjugate complex characteristic roots pair with the largest real part, delay does not change the qualitative structures of the local bifurcations of the trivial solution. Numerical simulations and continuation computation results (see panels \textbf{G} and \textbf{H}) show that a limit cycle is created from the trivial solution via the ``principal'' Hopf bifurcation. By checking the directin of the Hopf bifurcation, it can be shown that this limit cycle is stable, and by computing the overlap \cite{Gencic90,Zhang13}, it can be shown that this limit cycle corresponds to the prescribed cycle (\ref{eq:PrescribedCycle01}).
	}
\end{example}

\begin{remark}
	\label{rem:Remark7}{\rm
		In addition to the network constructed from the antisymmetric simple MC-cycle (\ref{eq:PrescribedCycle01}) both with and without delay, we also analyzed the networks constructed from other anti-symmetric simple MC-cycles $\Sigma_1$, $\Sigma_2$, and $\Sigma_4$ both with and without delay. The three cycles are respectively generated from the anti-symmetric binary row vectors $\eta_1$ $=$ $(1$, $1$, $-1$, $-1)$, $\eta_2$ $=$ $(1$, $1$, $1$, $1$, $-1$, $-1$, $-1$, $-1)$, and $\eta_4$ $=$ $(1$, $1$, $1$, $1$, $1$, $-1$, $-1$, $-1$, $-1$, $-1)$.
		
		In Figure~\ref{fig:RingBifSummary00} we summarize the structure of the local bifurcations of the trivial solution of the networks constructed from anti-symmetric simple MC-cycles with the four networks constructed from $\Sigma_1$, $\Sigma_2$, $\Sigma_3$ and $\Sigma_4$, where $\Sigma_3$ is the cycle (\ref{eq:PrescribedCycle01}) and has been analyzed in details in Example~\ref{ex:Example3}. 
		
		Here we consider the networks without delay as special case of the networks with delay. As for a fixed $p$, all local bifurcations of the trivial solution that may occur in a network constructed from an anti-symmetric simple MC-cycle with $N=p/2$, are those whose bifurcation curves correspond to the curves shown here to the left of the Hopf bifurcation curve that intersects with and terminates on the pitchfork bifurcation curve.
		
		All local bifurcation curves shown in Figure~\ref{fig:RingBifSummary00} are computed with $\tau = 2.0$ms. For bifurcations of individual networks, normal forms and unfoldings can be computed after reducing the networks onto their respective center manifolds. However, because these detailed technical computations are beyond our main goals of this paper, we would leave them elsewhere. Instead, here we summarize the structure of the local bifurcations at the trivial equilibrium solution of the networks constructed from anti-symmetric simple MC-cycles with $N = p/2$ as follows. Unless otherwise stated, the networks mentioned in the next two paragraphs are those constructed from anti-symmetric simple MC-cycles with $N = p/2$.
		
		If $N$ is even, only Hopf bifurcations occur at the trivial equilibrium solution. For networks without delay, only $N/2$ Hopf bifurcations occur at the trivial equilibrium solution, and the one corresponding to $n_i = 1$ and $p-1$ creates the attracting limit cycle corresponding to the prescribed cycle. For networks with delay, corresponding to each $n_i$, infinitely many Hopf bifurcations occur at the trivial equilibrium solution, and among those corresponding to $n_i = 1$ and $p-1$, the first creates the attracting limit cycle corresponding to the prescribed cycle.
		
		If $N$ is odd, both Hopf bifurcations and pitchfork bifurcation occur at the trivial equilibrium solution. For networks without delay, only $(N-1)/2$ Hopf bifurcations occur at the trivial equilibrium solution, and the one corresponding to $n_i = 1$ and $p-1$ creates the attracting limit cycle corresponding to the prescribed cycle. For networks with delay, infinitely many Hopf bifurcations occur at the trivial equilibrium solution, and among those corresponding to $n_i = 1$ and $p-1$, the first one creates the attracting limit cycle corresponding to the prescribed cycle. In networks both with and without delay, pitchfork bifurcation occurs when parameters move accross the curve (\ref{eq:PitchforkCurve00}), which is independent of the transmission delay $\tau$. For networks with delay, corresponding to $N = p/2$, in addition to the pitchfork bifurcation, infinitely many ``extra'' Hopf bifurcation occurs too. These two bifurcations interact at the point depicted in Figure~\ref{fig:StabilityBoundary}\textbf{E}, which leads to a Bogdanov-Takens bifurcation.
	}
\end{remark}

%======================================================================================
\paragraph{Bifurcations in Networks Constructed from Simple MC-Cycles with $N = p$}
%======================================================================================
In addition to the networks constructed from anti-symmetric simple MC-cycles with $N = p/2$, there is another type of networks which are rings of unidirectionally coupled neurons. They are networks constructed from simple MC-cycles with $N=p$. The only difference in terms of the network connections is that while every network constructed from anti-symmetric simple MC-cycles with $N=p/2$ has one inhibitory connection, all connections of the network constructed from anti-symmetric simple MC-cycles with $N=p$ are excitatory.

This type of excitatory unidirectional ring networks have been extensively investigated recently (see for example \cite{Pakdaman97,Guo07,Horikawa09a,Horikawa09b} etc.). Pakdaman et al. \cite{Pakdaman97} showed that the long lasting oscillations presented in such networks that they referred to as the \emph{transient oscillations} can not be explained by the analysis of the asymptotic behavior of the system. They considered the system of difference equations derived from the original system of delay differential equations. Such a system of difference equations can be used to approximate the original system of delay differential equations when the time scale under consideration is much larger than the characteristic charge-discharge time of the network. Pakdaman et al. showed that the long lasting oscillations presented in the original network correspond to the attracting periodic orbits in the descretized system of difference equations. Accordingly, they argued that the long lasting transient behavior observed in the original system of delay differential equations is due to the competition between the antagonistic asymptotic behavior of the original system and that of its descretized system. Both other properties of the transient oscillations and bifurcation structures of the excitatory unidirectional ring networks were investigated by many others in the past few years too (see for example \cite{Guo07,Horikawa09a,Horikawa09b} etc.).

Here, we may consider the networks investigated by Pakdaman et al \cite{Pakdaman97} and others as a special case of our networks constructed from simple MC-cycles with $N=p$, in which $C_0 = 0$. We claim that the cycles retrieved in such networks corresponding to the prescribed cycles are transient oscillations described by Pakdaman et al \cite{Pakdaman97}. Next, we briefly discuss in such networks how the prescribed cycles determine the local bifurcation structures of the trivial equilibrium solution.

Since the networks here are constructed from simple MC-cycles with $N=p$, all integers from $0$ to $p-1$ are indices appearing in the characteristic equations (\ref{eq:CharacteristicEquationProd00}) and (\ref{eq:CharacteristicEquationProdNoDelay00}) of the networks both with and without delay. As we have shown in Section 3.1, pitchfork bifurcation, Hopf bifurcations, and Bogdanov-Takens bifurcation may occur in such networks. From both the characteristic equations (\ref{eq:CharacteristicEquationProd00}) and (\ref{eq:CharacteristicEquationProdNoDelay00}), it is not difficult to see that for $n_i=0$, the characteristic root $\sigma=0$ only when $\beta = 1$, but by its definition $\beta = \mathrm{arctanh}(\beta_1)/\beta_1$ and $\beta_1\in (0,1)$, we have that $\beta > 1$ \cite{Zhang13}. It follows naturally that for $n_i=0$, throughout the whole parameter space, there is always one characteristic root with positive real part. Therefore, all the solutions bifurcating from the trivial equilibrium, including the trivial equilibrium itself, are unstable \cite{Hale93}. Accordingly, the periodic solutions bifurcating from the trivial equilibrium are unstable.

Next, we prove two useful results.

\begin{lemma}
	\label{lem:RingEquilibria00}
	Any network constructed from a simple MC-cycle with $N = p$ has at least three equilibrium solutions.
\end{lemma}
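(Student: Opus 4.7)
The plan is to exhibit three equilibria explicitly by restricting the equilibrium problem to a one-dimensional invariant subspace. The trivial equilibrium $\mathbf{u}^{*}=\mathbf{0}$ is evidently a solution of
$$
    \mathbf{u}^{*} \;=\; C_{0}\beta_{K}\mathbf{J}^{0}\tanh(\lambda\mathbf{u}^{*}) + C_{1}\beta_{K}\mathbf{J}\tanh(\lambda\mathbf{u}^{*}),
$$
so it remains to produce two nonzero equilibria. I would look for them along the ``diagonal'' subspace $\mathcal{D} = \{c\,\mathbf{1}:c\in\mathbb{R}\}$ where $\mathbf{1}=(1,1,\ldots,1)^{T}$.

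First I would check that $\mathcal{D}$ is invariant under the right-hand side of the equilibrium equation. Because the prescribed cycle is a simple MC-cycle with $N=p$, the network connectivity is (up to the ordering of neurons) a purely excitatory unidirectional ring, as discussed in the paragraph preceding the lemma. Concretely, $\Sigma$ is a square nonsingular binary matrix and $\mathbf{J}=\Sigma\mathbf{P}\Sigma^{-1}$, $\mathbf{J}^{0}=\Sigma\Sigma^{-1}=\mathbf{I}$. Since the cycle corresponds to the fully excitatory ring, each row of $\mathbf{J}$ contains exactly one $+1$ and the rest zeros, so $\mathbf{J}\mathbf{1}=\mathbf{1}$ (and trivially $\mathbf{J}^{0}\mathbf{1}=\mathbf{1}$). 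Hence for $\mathbf{u}=c\,\mathbf{1}$ both $\mathbf{J}^{0}\tanh(\lambda\mathbf{u})$ and $\mathbf{J}\tanh(\lambda\mathbf{u})$ equal $\tanh(\lambda c)\mathbf{1}$, so the equilibrium equation restricted to $\mathcal{D}$ collapses to the scalar equation
$$
    c \;=\; \beta_{K}\tanh(\lambda c),
$$
using $C_{0}+C_{1}=1$.

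Next I would analyse this scalar equation. Let $g(c)=\beta_{K}\tanh(\lambda c)-c$. Then $g(0)=0$, $g$ is odd, and $g'(0)=\beta_{K}\lambda-1=\beta-1$. Since $\beta_{1}\in(0,1)$ implies $\beta=\mathrm{arctanh}(\beta_{1})/\beta_{1}>1$, we have $g'(0)>0$, while $g(c)\to-\infty$ as $c\to+\infty$ because $\tanh$ is bounded. By the intermediate value theorem there exists $c^{*}>0$ with $g(c^{*})=0$, and by oddness $-c^{*}$ is a second nonzero root. Together with the trivial root $c=0$, this produces three distinct equilibria of the full network on $\mathcal{D}$, proving the lemma.

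The one step that requires care is the invariance of $\mathcal{D}$, i.e.\ verifying $\mathbf{J}\mathbf{1}=\mathbf{1}$ from the structural characterization of simple MC-cycles with $N=p$; all the remaining content is an elementary one-dimensional bifurcation argument driven by $\beta>1$. If, for a given convention, $\mathbf{J}$ turns out to be a signed permutation, the same reasoning applies along the eigenvector of $\mathbf{J}$ corresponding to its real eigenvalue of largest modulus, and the scalar reduction still yields three roots by the same odd-function argument.
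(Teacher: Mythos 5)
Your proposal is correct and follows essentially the same route as the paper: both reduce the equilibrium problem to the scalar equation $x=\beta_K\tanh(\lambda x)$ along the constant (diagonal) direction of the excitatory ring and then use $\beta>1$ together with the intermediate value theorem to obtain the two nonzero roots $\pm u^*$ in addition to the trivial one. Your explicit verification of the invariance of the diagonal via $\mathbf{J}\mathbf{1}=\mathbf{1}$ is a slightly more careful phrasing of the substitution step the paper performs directly on the ring form of the equations, but the argument is the same.
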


\begin{proof}
	Any network constructed from a simple MC-cycle has the following general form
	\begin{equation}
		\label{eq:ExcitatoryRingNetwork00}
		\frac{du_{i}}{dt}(t) = -u_i(t) + C_0\beta_K\tanh(\lambda u_i(t)) + C_1\beta_K\tanh(\lambda u_{i+1}(t - \tau))
	\end{equation}
	where $\tau=0$ for networks without delay, and following \cite{Pakdaman97}, the index $i$ is taken modulo $N+1$, i.e. $u_{N+1} = u_1$. We show that this network has at least the following three equilibria, $\mathbf{u^*}$ $=$ $\pm(u^*$, $u^*$, $\dots$, $u^*)^T$ $\in$ $\mathbb{R}^N$ and $\mathbf{0}$ $\in$ $\mathbb{R}^N$. It is trivial to verify that $\mathbf{0}$ is a solution. Therefore, we only consider the non-trivial solution here. Substitute the $\mathbf{u}(t)$ $=$ $\mathbf{u^*}$ into the network (\ref{eq:ExcitatoryRingNetwork00}) gives a scalar equation 
	$$
		\dot{u}^* = -u^* + \beta_K\tanh(\lambda u^*).
	$$
	Let $f(x) = x - \beta_K\tanh(\lambda x)$, we have that $\lim\limits_{x\rightarrow -\infty} f(x) = -\infty$, $\lim\limits_{x\rightarrow\infty} f(x) = \infty$, and $f(0) = 0$. Also, since $\dot{f}(x) = 1 - \beta(1 - \tanh^2(\lambda x))$, it follows that setting $\dot{f}(x) = 0$ gives 
	$$
		\tanh^2(\lambda x) = \frac{\beta - 1}{\beta}.
	$$
	Since $\beta>1$, we have that $f(x)$ has two distinct critical points $x^-$ and $x^+$. Since $f(0) = 0$, $\dot{f}(0) < 0$, and $\lim\limits_{x\rightarrow\pm\infty}\dot{f}(x)$ $=$ $1$ $>$ $0$ it follows from the intermediate value theorem that $f(x^-) > 0$ and $f(x^+) < 0$, and this implies that $f(x)$ has three roots. This proves the assertion.
\end{proof}

For the convenience of discussion, we denote the two non-trivial equilibrium solutions by $\mathbf{u^-}$ and $\mathbf{u^+}$ respectively. Next, we prove that these two non-trivial equilibrium solutions are asymptotically stable. 

\begin{lemma}
	\label{eq:RingEquilibriaStability00}
	Let $u^*>0$ be the positive solution of the equation $x - \beta_K\tanh(\lambda x) = 0$. Then the non-trivial equilibrium solutions $\mathbf{u^-}$ $=$ $-(u^*$, $u^*$, $\dots$, $u^*)^T$ $\in$ $\mathbf{R}^N$ and $\mathbf{u^+}$ $=$ $(u^*$, $u^*$, $\dots$, $u^*)^T$ $\in$ $\mathbf{R}^N$ are asymptotically stable for $\beta < 1/(1 - \tanh^2(\lambda u^*))$.
\end{lemma}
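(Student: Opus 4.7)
The plan is to linearize the ring (\ref{eq:ExcitatoryRingNetwork00}) at $\mathbf{u^+}$, exploit the circulant structure to diagonalize the linearization into $N$ scalar characteristic equations, and then show that under the hypothesis
$\alpha := \beta(1-\tanh^2(\lambda u^*)) < 1$
every characteristic root has negative real part. By the $\mathbf{u}\mapsto-\mathbf{u}$ symmetry of (\ref{eq:ExcitatoryRingNetwork00}), combined with the identity $1-\tanh^2(-\lambda u^*)=1-\tanh^2(\lambda u^*)$, the same argument applies verbatim to $\mathbf{u^-}$, so we need only analyze $\mathbf{u^+}$.

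First I would set $u_i(t)=u^*+\delta u_i(t)$ and Taylor-expand. Using $\beta_K\lambda=\beta$, $\tanh'(\lambda u^*)=\lambda(1-\tanh^2(\lambda u^*))$, and the fixed-point relation $u^*=\beta_K\tanh(\lambda u^*)$ (which kills the zeroth-order terms after summing $C_0+C_1=1$), the linearization reduces to
\begin{equation*}
\dot{\delta u_i}(t) = (C_0\alpha-1)\delta u_i(t) + C_1\alpha\,\delta u_{i+1}(t-\tau),\qquad u_{N+1}=u_1.
\end{equation*}
Because the coupling is circulant of period $N$, the Fourier modes $\phi_i^{(k)}=e^{2\pi\mathbf{i}ki/N}$, $k=0,\dots,N-1$, simultaneously diagonalize the spatial action. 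Substituting the ansatz $\delta u_i(t)=e^{\sigma t}\phi_i^{(k)}$ produces the $N$ scalar quasi-polynomial equations
\begin{equation*}
\sigma + 1 - C_0\alpha \;=\; C_1\alpha\, e^{-\sigma\tau + 2\pi\mathbf{i}k/N},\qquad k=0,\dots,N-1,
\end{equation*}
whose union is the spectrum of the generator of the linearized DDE.

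The key step is to rule out roots with $\Re(\sigma)\geq 0$. Writing $\sigma=a+\mathbf{i}b$ and taking moduli,
\begin{equation*}
|\,a+1-C_0\alpha+\mathbf{i}b\,| \;=\; C_1\alpha\, e^{-a\tau}.
\end{equation*}
Suppose for contradiction that $a\geq 0$. Then $e^{-a\tau}\leq 1$, so the right-hand side is at most $C_1\alpha$. On the other hand, the hypothesis $\alpha<1$ together with $C_0+C_1=1$ gives $1-C_0\alpha>C_1\alpha$, whence $a+1-C_0\alpha\geq 1-C_0\alpha>C_1\alpha\geq C_1\alpha e^{-a\tau}$. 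The modulus on the left therefore strictly exceeds the right-hand side, which is impossible. Hence every characteristic root satisfies $\Re(\sigma)<0$. Since for a retarded DDE only finitely many roots lie to the right of any vertical line in $\mathbb{C}$ (see Hale and Verduyn Lunel), it follows that $\sup\Re(\sigma)<0$, and asymptotic stability of the nonlinear equilibrium $\mathbf{u^+}$ follows from the principle of linearized stability. The same argument applied at $\mathbf{u^-}$ completes the proof.

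I do not expect a substantive obstacle: the circulant symmetry collapses the infinite-dimensional spectral problem to a one-parameter family of scalar quasi-polynomials, and the bound $1-C_0\alpha>C_1\alpha$ is sharp enough to close the contradiction uniformly in $\tau\geq 0$ and $k$, thereby handling the ODE case $\tau=0$ and the DDE case $\tau>0$ in one stroke. The only mild care needed is the passage from pointwise negativity of $\Re(\sigma)$ to a strict negative upper bound, which is automatic for retarded equations by the standard spectral localization result.
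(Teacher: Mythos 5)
Your proposal is correct, and its setup (linearize at $\mathbf{u^+}$, use the circulant/cyclic structure to split the spectrum into $N$ scalar quasi-polynomials $\sigma + 1 - C_0\alpha = C_1\alpha\, e^{-\sigma\tau + 2\pi\mathbf{i}k/N}$ with $\alpha = \beta(1-\tanh^2(\lambda u^*))$) is exactly the paper's. Where you diverge is in the key spectral step, and your version is the stronger one. The paper isolates only the factor with $n_i=0$, sets $\sigma=\mathbf{i}\omega$, and reads off that a root can reach the imaginary axis only when $\beta = 1/((C_0+C_1\cos(\omega\tau))(1-\tanh^2(\lambda u^*))) \geq 1/(1-\tanh^2(\lambda u^*))$; this is a boundary-crossing argument that (i) is written out for a single Fourier mode and (ii) strictly speaking only excludes roots \emph{on} the imaginary axis, so it implicitly relies on a continuation-from-a-stable-parameter argument to conclude that all roots lie in the left half-plane. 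Your modulus estimate $|a+1-C_0\alpha+\mathbf{i}b| \geq 1-C_0\alpha > C_1\alpha \geq C_1\alpha e^{-a\tau}$ under the assumption $a\geq 0$ handles every mode $k$ at once (since $|e^{2\pi\mathbf{i}k/N}|=1$) and every $\tau\geq 0$ including $\tau=0$, and it directly places the whole spectrum in the open left half-plane rather than merely showing the axis is not crossed; the final appeal to spectral localization for retarded equations to get $\sup\Re(\sigma)<0$ is the right way to close the argument. In short: same decomposition, but your direct a priori bound replaces (and tightens) the paper's single-mode crossing computation.
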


\begin{proof}
	Here we only analyze the local stability of the equilibrium $\mathbf{u^+}$, because the local stability analysis for $\mathbf{u^-}$ is exactly the same. 
	
	Linearizing the original network (\ref{eq:ExcitatoryRingNetwork00}) around the equilibrium $\mathbf{u^+}$ gives
	$$
		\dot{\mathbf{u}}(t) = \mathbf{A_1}(\mathbf{u^+})\mathbf{u}(t) + \mathbf{A_2}(\mathbf{u^+})\mathbf{u}(t-\tau)
	$$
	where $\mathbf{A_1}(\mathbf{u^+}) = (C_0\beta(1 - \tanh^2(\lambda u^*)) - 1)\mathbf{I}$ and $\mathbf{A_2}(\mathbf{u^+}) = C_1\beta(1 - \tanh^2(\lambda u^*))$. Substituting the ansatz $\mathbf{u} = \phi e^{\sigma t}$ with $\phi\in\mathbb{R}^N$ into the above linearized equation leads to the characteristic equation as follows
	$$
		\det(\Delta(\sigma)) = \prod\limits_{i=1}^N(\sigma + 1 - C_0\beta(1 - \tanh^2(\lambda u^*)) - C_1\beta(1 - \tanh^2(\lambda u^*)) e^{2n_i\pi\mathbf{i}/p - \sigma\tau}) = 0.
	$$
	For $n_i = 0$, the above characteristic equation reduces to
	$$
		\sigma + 1 - C_0\beta(1 - \tanh^2(\lambda u^*)) - C_1\beta(1 - \tanh^2(\lambda u^*)) e^{ - \sigma\tau} = 0.
	$$
	Let $\sigma = \alpha + \mathbf{i}\omega$, then the above equation becomes
	$$
		\begin{cases}
			\begin{array}{ccl}
				\alpha & = & (C_0 + C_1 e^{ - \alpha\tau}\cos(\omega\tau))\beta(1 - \tanh^2(\lambda u^*)) - 1 \\
				\omega & = & C_1\beta(1 - \tanh^2(\lambda u^*))e^{ - \alpha\tau}\sin(\omega\tau)
			\end{array}
		\end{cases}
	$$
	If $\alpha = 0$, then $\beta = 1/((C_0 + C_1\cos(\omega\tau))(1 - \tanh^2(\lambda u^*)))$. It follows that for $\beta < 1/(1 - \tanh^2(\lambda u^*))$, $\Re(\sigma) = \alpha < 0$.
\end{proof}

Thus, for networks constructed from simple MC-cycles, the situation is very similar to that Pakdaman et al described in \cite{Pakdaman97}, and the unstable limit cycles bifurcating from the trivial equilbrium solution will stay in the boundary between the respective basins of attraction of the two non-trivial equilibrium points $\mathbf{u^-}$ and $\mathbf{u^+}$, which has been proved to be a codimension one locally Lipschitz manifold containing the unstable equilibrium point $\mathbf{u} = \mathbf{0}$ and its stable manifold \cite{Pakdaman97}.

%======================================================================================
\paragraph{Bifurcations in Networks Constructed from More Complicated Admissible Cycles}
%======================================================================================
In the above two different cases, we have discussed the local bifurcations of the trivial solution of the two types of ring networks of unidirectionally coupled neurons. We showed that the prescribed cycles in the two different types networks are retrieved as different objects. In the networks with one inhibitory connection, i.e., in those constructed from anti-symmetric simple MC-cycles with $N = p/2$, the prescribed cycles are retrieved as the attracting limit cycles bifurcating from the trivial equilibrium via Hopf bifurcations. In the networks without inhibitory connection, i.e., in those constructed from simple MC-cycles with $N = p$, the prescribed cycles are retrieved as the so called long lasting \emph{transient oscillations}. In this subsubsection, we continue to discuss the local bifurcations of the trivial solution of the networks constructed from more general and complicated cycles, and show that, in more general cases, both the prescribed cycles and the derived cycles stored in the same networks may be retrieved as either attracting limit cycles or the long lasting transient oscillations.

%%%%%%%%%%%%%%%%%%%%%%%%%%%%%%%%%%%%%%%%%%%%%%%%%%%%%%%%%%%%%%%%%%%%%%%%%%%%
\begin{figure}
	\begin{center}
		\includegraphics[height=2.5in]{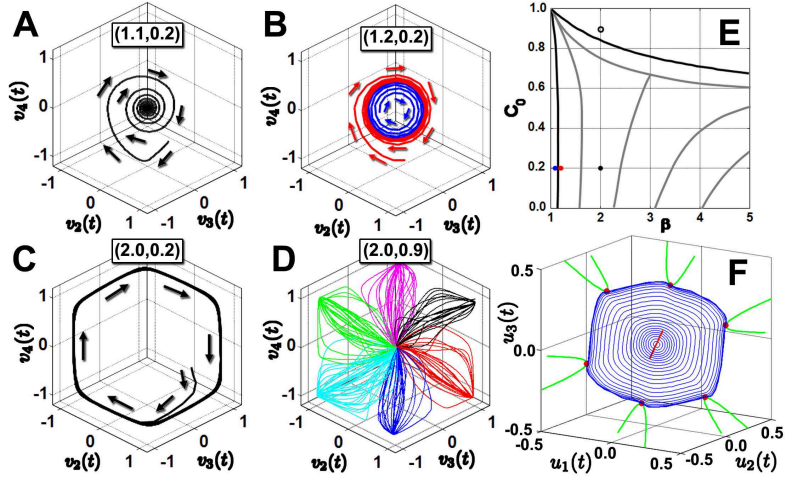}
	\end{center}
	\caption{\footnotesize{The attracting limit cycle bifurcating from the trivial equilibrium solution of the network constructed from $\Sigma_3$ in Example~\ref{ex:Example01}. In the four panels on the left, simulative solution trajectories starting from randomly chosen constant initial data with different $(\beta,C_0)$ parameter values (\textbf{A}: $(1.1,0.2)$, \textbf{B}: $(1.2,0.2)$, \textbf{C}: $(2.0,0.2)$, and \textbf{D}: $(2.0,0.9)$) are illustrated. The blue, red, black solid circles and the open circle in panel \textbf{E} indicate the locations of the $(\beta,C_0)$ parameter values for the simulations shown in the four panels on the left. The black curve on the left in \textbf{E} and the gray curves are curves on which conjugate complex and real characteristic roots move across the imaginary axis indicating possible Hopf and pitchfork bifurcations. The black curve on the top of \textbf{E} is the curve corresponding the \emph{multiple saddle-nodes on the limit cycle bifurcation} \cite{Hoppensteadt97}, which destroys the attracting limit cycle bifurcating from the trivial solution. Panel \textbf{F} shows the numerical continuations of the equilibrium solutions and periodic solution bifurcating from the trivial solution. The little red dots on the green curves are where the multiple saddle-nodes on limit cycle bifurcation occurs, and the green curves are branches of the saddles and nodes created from the destroyed limit cycle. The red line in the middle are the two branches of the symmetric equilibria $(u,-u,u,-u,u)$ and $(-u,u,-u,u,-u)$ respectively, and these two equilibria arise from the trivial equilibrium via the pitchfork bifurcation. Both the simulations (\textbf{A}-\textbf{D}) and the numerical continuations (\textbf{F}) confirm the occurence of the predicted bifurcations (\textbf{E}). The parameters are set as follows: $\tau = 1.0$ms, and $\lambda = 10$.}}
	\label{fig:N5P6Retrieval}
\end{figure}
%%%%%%%%%%%%%%%%%%%%%%%%%%%%%%%%%%%%%%%%%%%%%%%%%%%%%%%%%%%%%%%%%%%%%%%%%%%%

Although in general, the network constructed from a generic simple cycle may have complicated network topology, the way the prescribed cycle determines the structures of local bifurcations of the trivial equilibrium solution of the network by its structural features remains exactly the same. Next, we illustrate in a network constructed from a simple cycle that the anti-symmetric derived cycle is retrieved as the attracting limit cycle, and the prescribed cycle is retrieved as the long lasting transient oscillations.

\begin{example}
	\label{ex:N5P6Bifurcations}{\rm
		Consider the cycle $\Sigma_3$ in Example~\ref{ex:Example01}. Figure~\ref{fig:N5P6Retrieval} illustrates the evolution of the attracting limit cycle bifurcating from the trivial equilibrium solution. Since $\Sigma_3$ only annihilates the first eigenvector $v^{(0)} = (1,1,\dots,1)^T\in\mathbb{C}^6$ of the cyclic permutation matrix $\mathbf{P}$, it follows that the indices $n_i$'s in the characteristic equation (\ref{eq:CharacteristicEquationProd00}) are $1,2,\dots,5$. Therefore, both Hopf bifurcations, and pitchfork bifurcation may occur. In Figure~\ref{fig:N5P6Retrieval}\textbf{E}, the curves on which these bifurcations may occur are shown as gray curves and the black curve on the left. The black curve on the left corresponds to the conjugate characteristic roots pair with the largest real part move across the imaginary axis transversely. Therefore, when parameters move across this curve, an attracting limit cycle may bifurcate from the trivial solution. In panels \textbf{A} and \textbf{B}, we numerically compute solution trajectories starting from randomly chosen constant initial data $\varphi(\theta)$ $\in$ $C([-1.0,0],\mathbb{R}^5)$, and in panel \textbf{F} we track both the periodic solution and the non-trivial equilibrium solutions corresponding to the binary patterns in the derived admissible cycle $\Sigma_1$ in Example~\ref{ex:Example01} with DDE-BIFTOOL. Both simulations (\textbf{A}-\textbf{D}) and numerical continuation computations (\textbf{F}) confirm the arising of the attracting limit cycle, and illustrate that the attracting limit cycle corresponds to the anti-symmetric, simple, and consecutive but not minimal cycle \cite{Zhang13} $\Sigma_1$ in Example~\ref{ex:Example01}. Meanwhile, (see Example~\ref{ex:Example3} too) as the Hopf bifurcation corresponding to $n_3=5=p/2$ does occur in this network, the cycle $\Sigma_4$ shown in Figure~\ref{fig:Figure5}\textbf{A} is retrieved successfully in this network too, and it is retrieved as the unstable periodic solution along the symmetric diagonal line $\{\mathbf{u}\in\mathbb{R}^5|\mathbf{u} = (u,-u,u,-u,u)\}$.
	}
\end{example}

In \cite{Zhang13}, we have seen that networks constructed from separable cycles consist of isolated clusters, and each cluster corresponds a simple cycle component. Accordingly, not only the local bifurcation structures of the trivial equilibrium solution, but also the dynamics of the networks are completely determined by its simple cycle components. Therefore, next we close our discussions on determination of the local bifurcation structures of the trivial equilibrium solution with a network constructed from an inseparable admissible cycle.

%%%%%%%%%%%%%%%%%%%%%%%%%%%%%%%%%%%%%%%%%%%%%%%%%%%%%%%%%%%%%%%%%%%%%%%%%%%%
\begin{figure}
	\begin{center}
		\includegraphics[height=2in]{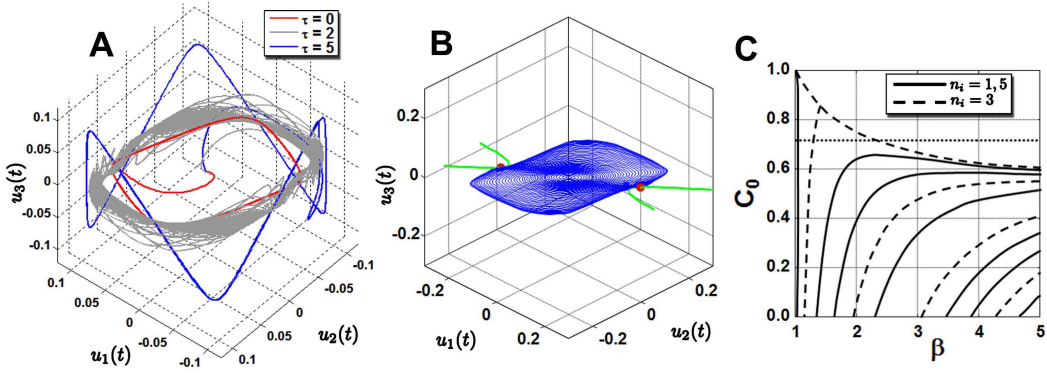}
	\end{center}
	\caption{\footnotesize{Retrieving inseparable composite cycle and local bifurcation structures. The curves in \textbf{A} are simulative solution trajectories with parameter values $C_0=0.71$, $\beta=2$ and different delay values: $\tau=0$ms (red curve), $\tau=2$ms (gray curve), $\tau=5$ms (blue curve). Panel \textbf{B} illustrates the numerical continuations (along $C_0=0.71$, see dotted line in \textbf{C}) of both the periodic solution (blue curves) bifurcating from the trivial equilibrium and the two pairs of saddles and nodes (green curves) bifurcating from the periodic solution via the multiple saddle-nodes on limit cycle bifurcation. The two saddle-nodes are plotted as the two small solid red circles on the two green curves. It is necessary to emphasize that all curves are actually on the plane $u_3(t) = 0$. Panel \textbf{C} illustrates curves of all possible local bifurcations of the trivial solution. All but the top dashed curve, which is that of the pitchfork bifurcation, are Hopf bifurcation curves. The numerical continuation computations and the bifurcation curves are implemented and obtained in Matlab with the package DDE-BIFTOOL 2.03.}}
	\label{fig:CompositeCycle00}
\end{figure}
%%%%%%%%%%%%%%%%%%%%%%%%%%%%%%%%%%%%%%%%%%%%%%%%%%%%%%%%%%%%%%%%%%%%%%%%%%%%

\begin{example}
	\label{ex:InseparableBifurcation}{\rm
		Consider the network constructed from the following cycle
		\begin{equation}
			\label{eq:PrescribedCycle02}
			\Sigma = \left(\begin{array}{rrrrrr}
				 1 &  1 & -1 & -1 & -1 &  1  \\
				 1 & -1 & -1 & -1 &  1 &  1  \\
				-1 &  1 & -1 &  1 & -1 &  1 
			\end{array}\right).
		\end{equation}
		Since the vector space spanned by the set of all cyclic permutations of the third row $\eta_3$, which is called the loop generated by $\eta_3$ in \cite{Zhang13}, is contained in the vector space spanned by the set of all cyclic permutations of the first two rows $\eta_1$ and $\eta_2$, and $\eta_3$ is linearly independent of $\eta_1$ and $\eta_2$, it follows that $\Sigma$ is a inseparable composite MC-cycle \cite{Zhang13}.
		
		Direct computations show that the prescribed cycle is the only cycle satisfying the transition conditions imposed by the prescribed cycle itself. Figure~\ref{fig:CompositeCycle00} illustrates the simulative solutions (\textbf{A}), continuation of the limit cycle bifurcating from the trivial solution via the ``principal'' Hopf bifurcation (\textbf{B}), and the curves of all possible local bifurcations of the trivial solution (\textbf{C}). For $C_0=0.71$, $\beta=2.0$, when the delay time $\tau=0$ms, the solution trajectory approaches the attracting limit cycle created by the ``principal'' Hopf bifurcation. When increase $\tau$, the solution trajectory starts deviating from the attracting limit cycle (see the gray curve in panel \textbf{A} for example), and becomes more and more close to the prescribed cycle (see the blue curve in panel \textbf{A}). However, during this process, only one ``extra'' Hopf bifurcation occurs, which creats one unstable limit cycle, and four non-trivial equilibria are created via one pair of saddle-node bifurcations. We compared the successfully retrieved cycle, which corresponds to the prescribed cycle $\Sigma$, with both the attracting limit cycle created via the ``principal'' Hopf bifurcation corresponding to $n_1=1$ and $n_3=5$, and the unstable limit cycle created via the ``extra'' Hopf bifurcation corresponding to $n_2=3$. The retrieved cycle corresponds to none of them. We argue that this retrieved cycle may be the transient oscillation described by Pakdaman et al and others \cite{Pakdaman97,Horikawa09a,Horikawa09b}, and it may be the consequence of the competition among the attracting limit cycle, the unstable limit cycle and the saddles and nodes.
	}
\end{example}

%%%%%%%%%%%%%%%%%%%%%%%%%%%%%%%%%%%%%%%%%%%%%%%%%%%%%%%%%%%%%%%%%%%%%%%%%%%%%%%%%%%%%%%%%%%%%%%%%%%%%%%%%%%%%%
\section{Conclusions and Discussions}
%%%%%%%%%%%%%%%%%%%%%%%%%%%%%%%%%%%%%%%%%%%%%%%%%%%%%%%%%%%%%%%%%%%%%%%%%%%%%%%%%%%%%%%%%%%%%%%%%%%%%%%%%%%%%%
In summary, in this paper, we have systematically studied retrieval of admissible cycles in Hopfield-type networks with and without delay. In the networks with the $C_0$, $\beta$ parameter values appropriately chosen and the delay time $\tau$ sufficiently large in comparison to the time span of the onset/offset transients, we proved that any admissible cycle is retrievable. In terms of the linear stability analysis, we decompose each of the characteristic equations (\ref{eq:CharacteristicEquationProd00}) and (\ref{eq:CharacteristicEquationProdNoDelay00}) into a product of $N$ factors, each corresponds to an $n_i$ index in characteristic equation, which is an integer between $0$ and $p-1$. Based on this decomposition, we obtained a scenario of all possible local bifurcations of the trivial solution for every network constructed from an admissible cycle. Clearly, the scenario is determined by the prescribed admissible cycle in terms of its structural features by selecting the $n_i$ indices appearing in the characteristic equations (\ref{eq:CharacteristicEquationProd00}) and (\ref{eq:CharacteristicEquationProdNoDelay00}). In \cite{Zhang13}, we have explained how an admissible cycle determines which $N$ integers among those from $0$ to $p-1$ are chosen to be the $n_i$ indices. Since these $n_i$ indices determine the arrangement of the curves of characteristic roots with zero real part, which provides a scenario of all possible local bifurcations of the trivial solution, the prescribed cycle determines the structure of the local bifurcations of the trivial solution with its structural features by ``selecting'' the $n_i$ indices. In the context of networks of coupled oscillators, a similar idea has been used in determining stability of the synchronized oscillations \cite{Pecora98, Gabor12, Szalai13}. In \cite{Zhang13}, we have demonstrated a possible extension of our study to networks of coupled oscillators in a network of spiking neurons with bistable membrane behavior and postinhibitory rebound.

Using the MatLab packages, MatCont 3.1 and DDE-BIFTOOL 2.03, for numerical continuations and bifurcation analysis, we showed that addmissible cycles are stored and retrieved in the networks as different objects. Anti-symmetric simple MC-cycles with $N=p/2$ are stored and retrieved as the attracting limit cycles bifurcating from the trivial solution via the ``principal'' Hopf bifurcation. Anti-symmetric cycles of period two are stored and retrieved as the unstable periodic solution bifurcating from the trivial solution via the ``extra'' Hopf bifurcation corresponding to the index $n_i=p/2$, and the unstable periodic solution stays in the one-dimensional subspace corresponding to the symmetric diagonal $\{\mathbf{u}\in\mathbb{R}^N$ $|$ $\mathbf{u}$ $=$ $(u$, $-u$, $u$, $\dots$, $u)^T$, if $N$ is odd; and $\mathbf{u}$ $=$ $(u$, $-u$, $u$, $\dots$, $-u)^T$, if $N$ is even, with $u\in\mathbb{R}$ $\}$ in the phase space of the network. The rest admissible cycles are stored and retrieved as the long lasting transient oscillations. For $\tau$ sufficiently large, the transient oscillation last practically forever. 
	
While theoretical investigations \cite{Smith95, Guo07, Pakdaman97} suggested that solution trajectories of excitatory unidirectional ring networks should in general eventually converge to stable equilibria, numerical simulations usually show long lasting oscillatory patterns \cite{Pakdaman97}. Pakdaman et al and many others \cite{Pakdaman97, Horikawa09a, Horikawa09b} have studied such long lasting transient oscillations and their properties in details. It has been shown that such long lasting transient oscillations do not exist in the networks without delay, and can not be explained by the asymptotic dynamics of the networks with delay. In this paper, we illustrated that many admissible cycles are stored and retrieved in the networks with delay as transient oscillations, and based on the observations from the bifurcation analysis and numerical continuation computations of both the stable/unstable equilibrium solutions and periodic solutions, we conjecture that the transient oscillations are consequence of the interactions among the attracting limit cycle, unstable periodic solutions and equilibrium solutions. To clarify how the interactions among the stable/unstable periodic solutions and stable/unstable equilibrium solutions shape the long lasting transient oscillations would be a very interesting future direction to extend our study on storage and retrieval of cyclic patterns representing phase-locked oscillations discussed in this paper.
	
Cyclic patterns of neuronal activity in animal nervous systems are partially responsible for generating and controlling rhythmic movements from locomotion to gastrointestinal musculature activities. Neural networks of relatively small sizes that can produce cyclic patterned outputs without rhythmic sensory or central input are called central pattern generators (CPGs). So far, different models have been proposed to account for the underlying mechanisms of generation of the rhythmic activities \cite{Grillner85, Dickinson92, Buono01, Guertin09}. Among them, half-center oscillator is one of the most widely used models for studying CPGs \cite{McCrea08, Ermentrout09, Guertin09, Lewis13}, ring network model is another one \cite{Collins94, Dror99, Guertin09}. It has been shown that the classical half-center oscillator can be viewed as a limit cycle oscillator \cite{Cohen82}, so is the ring networks. During the past few decades, limit-cycle oscillators have played key roles in understanding the rhythmogenesis in animal CPG networks \cite{Buono01, Cohen92, Golubitsky99, Izhikevich07, Rubin09}. Recently, transient dynamics has been suggested to take important roles in generating cyclic patterns \cite{Rabinovich08}, and \emph{stable heteroclinic channels} \cite{Rabinovich01,Huerta04}, or \emph{stable heteroclinic sequences} \cite{Afraimovich04}. In this paper, we have illustrated that cyclic patterns except for those with special symmetric structures are stored and retrieved as transient oscillations, and those long lasting transient oscillations may be shaped by the interactions among the attracting limit cycle, unstable periodic solutions, saddles and nodes. Since by cyclically identifying stable submanifold of one saddle or hyperbolic periodic solution with the unstable submanifold of the other, a heteroclinic channel may be constructed, the interactions among the stable/unstable periodic solutions and stable/unstable equilibriam solutions may create heteroclinic channels, and some long lasting transient oscillations may correspond to such heteroclinic channels.

%Appendices should appear before the bibliography section, not after, and any acknowledgments should be placed after the appendices and before the bibliography. 

\end{document}